\newcommand{\link}[1]{ \mathrm{Link}(#1)}
\newcommand{\cone}[1]{ \mathrm{Cone}(#1)}
\theoremstyle{definition}
\newtheorem{defn}{Definition}[section]
\theoremstyle{theorem}
\newtheorem{thm}[defn]{Theorem}
\theoremstyle{conjecture}
\theoremstyle{lemma}
\newtheorem{lemma}[defn]{Lemma}
\theoremstyle{proposition}
\theoremstyle{claim}
\theoremstyle{corollary}
\newtheorem{cor}[defn]{Corollary}
\theoremstyle{remark}
\newtheorem{rmk}[defn]{Remark}
\theoremstyle{example}
\newtheorem*{rep@theorem}{\rep@title}
\newcommand{\newreptheorem}[2]{%
\newenvironment{rep#1}[1]{%
 \def\rep@title{#2 \ref{##1}}%
 \begin{rep@theorem}}%
 {\end{rep@theorem}}}
\DeclareMathOperator{\cat}{CAT}
\DeclareMathOperator{\Cay}{Cay}
\DeclareMathOperator{\Corners}{Corners}
\DeclareMathOperator{\diam}{diam}
\DeclareMathOperator{\interior}{int}
\title{Automaticity of non-positively curved $k$-fold triangle groups}
\author{Ana Isakovi\'c}
\begin{document}

\maketitle

\begin{abstract}
    We show that non-positively curved $k$-fold triangle groups have finitely many cone types, and hence a regular language of all geodesics. Further, we prove that the language of lexicographically first geodesics is both regular and satisfies the fellow traveller property, giving an automatic structure for this family of groups.
\end{abstract}

\section{Introduction}

In this paper, we look at the class of groups called \textit{k-fold triangle groups.} These groups are important because they are the simplest groups that don’t fit into the cubical paradigm, which has been well understood by the work of Agol, Wise and others. Despite their convenient presentations, $k$-fold triangle groups for $k\geq3$ can behave in unpredictable ways, very differently from triangle groups which they generalise. For this reason, they were explored in \cite{LMW18} and \cite{CCKW21} as candidates for non-residually finite hyperbolic groups.

The main result of this paper is the following theorem, which can be seen as an analogue of the Niblo--Reeves theorem in the cubical setting (see \cite{NR98}) and paves the way for performing explicit computations in $k$-fold triangle groups:

\begin{thm}
    Let $G$ be a non-positively curved $k$-fold triangle group. Then $G$ admits a language which is both regular and satisfies the fellow traveller property. Hence, $G$ is automatic.
    \label{automaticity thm}
\end{thm}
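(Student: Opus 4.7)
The plan is to obtain the automatic structure in two stages: first establish that $G$ has finitely many cone types (giving a regular language of all geodesics), then refine to the language of lexicographically first geodesics, for which I would verify the fellow traveller property.

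First I would work inside the Cayley 2-complex $X$ of the standard $k$-fold triangle presentation. The non-positive curvature hypothesis should translate into a combinatorial link condition on $X$ (of Gromov/systolic flavour), which controls the shape of combinatorial geodesic bigons and triangles. Using this, I would prove cone-type finiteness by a Cannon-style local-to-global argument: any obstruction to extending a geodesic past a vertex $g$ must already be detectable in a ball of uniform radius $R$ around $g$, so the cone type of $g$ is determined by the combinatorial $R$-ball at $g$. Since $G$ acts properly cocompactly on $X$, there are only finitely many such $R$-ball types, hence only finitely many cone types. From this the regular language $L_{\mathrm{geo}}$ of all geodesics follows by the usual automaton-of-cone-types construction.

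To single out a canonical representative for each group element I would fix a total order on the generating set and let $L \subset L_{\mathrm{geo}}$ consist of the lexicographically first geodesic for each $g \in G$. The language $L$ is regular because, with finitely many cone types, a finite automaton can verify at every step that no smaller letter leads to a geodesic of the same length from the current position; so $L$ is a regular normal form. It remains to check the fellow traveller property.

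This is where I expect the main difficulty. Given two lex-first words $w, w'$ whose endpoints differ by a single generator, I must bound the Cayley-graph distance between corresponding prefixes uniformly in the length of the words. The approach is to span a geodesic bigon between $w$ and $w'$ in $X$ and use the link/curvature condition to decompose the bigon into a controlled sequence of 2-cells; within each cell, any surgery that strictly reduces lex order while preserving endpoint and length is a bounded-size move, so the lex-first choice propagates predictably across the bigon. The hard part will be ruling out long \emph{synchronisation failures}, in which $w$ and $w'$ might in principle drift apart by tracking opposite sides of a long strip of 2-cells: here non-positive curvature must be used decisively to prevent such strips from arising in a form that would let two lex-first geodesics diverge unboundedly. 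Once this bounded-synchronisation statement is established, combining it with the regularity of $L$ yields an automatic structure for $G$, completing the theorem.
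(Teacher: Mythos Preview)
Your high-level outline matches the paper's: prove finitely many cone types, deduce regularity of lex-first geodesics via Cannon, then verify fellow traveller. But the cone-type step contains a real gap.

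You write that ``the cone type of $g$ is determined by the combinatorial $R$-ball at $g$'' and that cocompactness gives finitely many such ball types. In a Cayley graph all $R$-balls are isomorphic by left translation, so taken literally this is vacuous. What you presumably intend is the $R$-ball decorated with distance-to-identity information (Cannon's $N$-types). For hyperbolic groups Cannon shows $N$-types determine cone types, with $R$ controlled by the hyperbolicity constant; but for merely non-positively curved groups no such statement is available in general, and establishing it is precisely the content you are trying to prove. Appealing to a ``systolic-flavour'' link condition does not help here: $k$-fold triangle complexes are not systolic, and even in genuinely systolic settings cone-type finiteness is a substantial theorem, not a consequence of cocompactness.

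The paper's actual mechanism is essentially metric rather than purely combinatorial, and it does not take place in the Cayley $2$-complex. The key step (Lemma~\ref{catacomb}) shows that the combinatorial distance between two group elements equals the number of edges of the Stallings $\cat(0)$ triangle complex $X$ crossed by the $\cat(0)$ geodesic between the corresponding triangles. This is proved by a delicate combinatorial Gauss--Bonnet argument on disc diagrams in an auxiliary angled complex $C$. From this one deduces that the directed link structure $\mathcal{L}^*(\tau)$ at a triangle $\tau$, a genuinely finite object, determines $\cone{\tau}$: the $\cat(0)$ geodesic to any farther point must approach each vertex it meets through a minimal triangle, so the local distance pattern propagates. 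Your proposal lacks this metric-to-combinatorial bridge; without it the local-to-global step has no engine. The fellow-traveller argument in the paper likewise runs through explicit Gauss--Bonnet curvature bounds on disc diagrams rather than a qualitative bigon decomposition, and it uses the lex-first hypothesis in a specific way (forcing extra negative curvature after the last maximally positively curved cell along $g$, via the observation that no vertex of $g'$ can lie in $\cone{u_1}$) that your sketch does not anticipate.
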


Non-positive curvature plays a crucial part in all of the arguments of the paper. Hyperbolic groups have long been known to be automatic (see \cite{Cannon84}), but it is still unknown if all of non-positively curved groups are. Furthermore, by the work of Leary and Minasyan \cite{LM21}, we know that non-positive curvature doesn't guarantee \textit{biautomaticity}. Coxeter groups were proven to be automatic by Brink and Howlett in 1993 \cite{BH93}, completing the work of Davis and Shapiro from 1991 \cite{DS91}, but they were only shown to be biautomatic in 2022 by Osajda and Przytycki \cite{OP22}. Recently, more biautomatic structures were given in \cite{Santos25}.
Another class, which is in spirit close to $k$-fold triangle groups, are small cancellation groups, known to be biautomatic by \cite{GS91}. 

 The main techniques used in proving Theorem \ref{automaticity thm} are disc diagram techniques and the angle structure setting introduced by Wise in \cite{Wise04}. A crucial step is the following lemma, which relates combinatorial geodesics in a non-positively curved $k$-fold triangle group with the $\cat(0)$ geodesics in its corresponding complex:

 \begin{lemma}
    	Let $G$ be a $k$-fold triangle group and $X$ the corresponding triangle complex. A (combinatorial) geodesic in $\Cay(G,S)$ is of length $n$ $\iff$ any $\cat(0)$-geodesic between (the interiors of) the corresponding faces in $X$ crosses exactly $n$ edges of $X$.

        \label{catacomb}
\end{lemma}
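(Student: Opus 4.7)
The plan is to prove this by establishing the equality of two quantities via a pair of inequalities. Let $d$ be the combinatorial distance in $\Cay(G,S)$ between the group elements corresponding to faces $f$ and $f'$, and let $m$ be the number of edges crossed by the $\cat(0)$ geodesic $\gamma$ between generic interior points $p \in \interior(f)$ and $q \in \interior(f')$ of $X$. The lemma is the statement $d = m$.

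The easy direction is $d \le m$. A $\cat(0)$ geodesic in a piecewise-Euclidean $2$-complex with generic endpoints enters a well-defined ordered sequence of open faces $f = h_0, h_1, \dots, h_m = f'$, with consecutive $h_{i-1}$ and $h_i$ meeting along the $i$th crossed edge. Under the correspondence between faces of $X$ and elements of $G$, adjacent faces differ by multiplication by a single generator of $S$, so this sequence is a combinatorial path of length $m$ in $\Cay(G,S)$, whence $d \le m$.

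The reverse inequality $m \le d$ is the substantive content, and the plan is to use a disc diagram argument in the spirit of Wise's angle structure setting. Take a combinatorial geodesic $P = f_0, \dots, f_d$ realizing $d$ and the face sequence $P'$ induced by $\gamma$; these two dual-graph paths share their endpoints, and so can be filled by a disc diagram $D \to X$, which we take to be reduced and of minimal area. Applying the combinatorial Gauss--Bonnet formula,
\[
2\pi\chi(D) = \sum_{v \in \interior(D)} \bigl(2\pi - \kappa(v)\bigr) + \sum_{v \in \partial D} \bigl(\pi - \kappa(v)\bigr),
\]
where $\kappa(v)$ denotes the sum of corner angles at $v$, non-positive curvature of $X$ gives $\kappa(v) \ge 2\pi$ on interior vertices, contributing non-positively. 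Along the portion of $\partial D$ corresponding to $\gamma$, local straightness of a $\cat(0)$ geodesic at each edge-crossing forces $\kappa(v) \ge \pi$ at each boundary corner (else $\gamma$ could be locally unfolded to a shorter curve). Along the $P$-portion of $\partial D$, the hypothesis that $P$ is a combinatorial geodesic plays the analogous role. With $\chi(D) = 1$, the curvature budget of $2\pi$ must be accounted for entirely by corner defects on the two sides, and a careful bookkeeping of defects per boundary crossing yields $m \le d$.

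The main obstacle will be the disc-diagram bookkeeping needed to transport the continuous $\cat(0)$ geodesic $\gamma$ into a boundary arc of $D$ with controlled combinatorial corner angles: this involves perturbing endpoints so that $\gamma$ avoids all $0$-cells of $X$, promoting the ``straight line through an edge'' property of $\gamma$ into a $\ge \pi$ corner-angle bound at each vertex of $\partial D$ on the $\gamma$-side, and ensuring the disc diagram is reduced enough that no interior vertex absorbs hidden curvature. Once these are in place, the Gauss--Bonnet count is essentially sharp and the bound $m \le d$ drops out.
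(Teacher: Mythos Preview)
Your overall strategy---fill a disc diagram between the induced path $g_\gamma$ and a combinatorial geodesic $g$, then apply combinatorial Gauss--Bonnet---is exactly the paper's. However, the execution has a real gap.

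The central error is the claim that ``local straightness of a $\cat(0)$ geodesic at each edge-crossing forces $\kappa(v) \ge \pi$ at each boundary corner'' on the $\gamma$-side. The boundary vertices along that side of $\partial D$ are \emph{centres of triangles} of $X$ (vertices of $g_\gamma$), not points on $\gamma$ itself, so the straightness of $\gamma$ does not directly constrain the interior angle of the diagram there; your unfolding argument simply does not apply. In fact that interior angle lies in $\{\tfrac{2\pi}{3},\tfrac{4\pi}{3}\}$ (it is the angle at the centre of an equilateral triangle between the segments to two neighbouring centres), and the value $\tfrac{2\pi}{3}<\pi$ occurs whenever $g_\gamma$ makes a convex turn toward $\mathcal{D}$, so your per-vertex bound is false. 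The paper replaces this with a \emph{cumulative} bound (Lemma~\ref{induced geodesic}): tracking the direction of $g_\gamma$ against that of $\gamma$ through the sequence of triangles yields $\kappa_{\mathcal{D}}^v(g_\gamma)+\tfrac{1}{2}\kappa_{\mathcal{D}}^f(g_\gamma)\le \tfrac{2\pi}{3}$, and even this requires a nontrivial treatment of the stretches where $\gamma$ passes through a vertex of $X$ (perturbing endpoints, as you suggest, does not remove these).

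There is a second gap. The disc diagram lives in the dual complex $C$, whose $2$-cells carry nonzero face curvature: the $n$-gonal cells from links contribute $-(n-6)\tfrac{\pi}{3}$, and the triangular cells from the $\mathbb{Z}/k$ edge groups contribute $-\pi$. Your Gauss--Bonnet formula omits face terms entirely, so the bookkeeping cannot close. Relatedly, on the $g$-side, the combinatorial-geodesic hypothesis does not give any per-vertex inequality either; the paper instead classifies the $g$-cells by the number of $g$-adjacent triangles, bounds how many consecutive positively curved boundary vertices each type can support, and uses retraction moves over such cells to reduce the area of $\mathcal{D}$ (running an induction on area) until the remaining configuration forces an upper bound on $\kappa_{\mathcal{D}}^v(g)$ contradicting the lower bound coming from Gauss--Bonnet together with the $g_\gamma$-estimate. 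That cell-type analysis and the induction-by-retraction, not a single sharp inequality, are the content your sketch is missing.
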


\textbf{Roadmap:} In Section 2, we give some background on the key notions of the paper: $k$-fold triangle groups, automaticity, and cone types. In Section 3, we prove Lemma \ref{catacomb}, relating combinatorial and metric structures of $k$-fold triangle groups. In Section 4, we prove Theorem \ref{cones} about the finiteness of cone types, upon which all the further results are based. Finally, in Section 5, we complete the proof of Theorem \ref{automaticity thm}, by showing that their geodesics satisfy the \textit{fellow traveller property.}\\

\textbf{Acknowledgements}: The author was supported by the Department of Pure Mathematics and Mathematical Statistics of the University of Cambridge through an EPSRC Doctoral Training Partnerships award. The author thanks Professor Henry Wilton for the support and guidance provided throughout the writing of this paper.

\section{Preliminaries}

    \subsection{$k$-fold triangle groups}

    Here we introduce $k$-fold triangle groups and a structure theorem by Stallings which gives a geometric approach to the non-positively curved groups of this class. For more information on $k$-fold triangle groups, see \cite{CCKW21}, and for more information on complexes of groups, see \cite{BH99}.

        \begin{defn}
            A \textit{generalised triangle group} is a fundamental group of a triangle of groups whose face group is trivial (see Figure \ref{gtg}). A \textit{$k$-fold triangle group} is a generalised triangle group whose edge groups are all isomorphic to $\mathbb{Z}/k\mathbb{Z}$, for some $k\in\mathbb{N}$.
            
            \begin{figure}[!ht]
                \centering
                \resizebox{0.3\textwidth}{!}{
          	\begin{tikzpicture}
    			
    			\node[isosceles triangle, isosceles triangle apex angle=60,draw,
    			inner sep=0pt,anchor=lower side,rotate=90,draw=black,
    			line width=0.5pt, minimum height=4cm, fill=gray!2] (triangle) at (1.6,-0.05) {};

    			\coordinate[label=left:$V_1$] (A) at (-0.7,0);
    			\coordinate[label=right:$V_2$] (B) at (3.9,0);
    			\coordinate[label=above:$V_3$] (C) at (1.6,4);
    			
    			\coordinate[label=below:$\mathbb{Z}/k$](c) at ($ (A)!.5!(B) $);
    			\coordinate[label=above left:$\mathbb{Z}/k$](b) at ($ (A)!.5!(C) $);
    			\coordinate[label=above right:$\mathbb{Z}/k$](a) at ($ (B)!.5!(C) $);
    			
    			\coordinate[label=below:$\{1\}$](f) at ($ (C)!.5!(c) $);
    			
    		\end{tikzpicture}
                }
                \caption{A $k$-fold triangle group with vertex groups $V_1$,$V_2$ and $V_3$}
                \label{gtg}
             \end{figure}
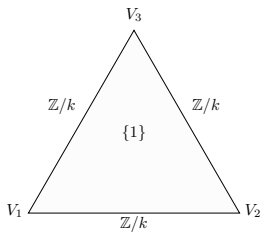
    
	\end{defn}

    In the definition above, when $k=2$ and the vertex groups are dihedral groups, we get a classical triangle group. Furthermore, all the examples where $k=2$ are isomorphic to an amalgamated product of triangle groups and finite groups.

    Under mild hypotheses, a $k$-fold triangle group admits a natural action on a $\cat(0)$ triangle complex with explicit links. To formalise this, we need a notion of a coset graph.

        \begin{defn}
            A \textit{coset graph} of a group $H$ with two subgroups $A$ and $B$ is the bipartite graph $\Gamma_{H(A,B)}$ with vertex set $H/A \cup H/B$ and edge set $H/(A\cap B)$, where the incidence relation is the relation of inclusion.
        \end{defn}

        \begin{thm}[Stallings, \cite{Stallings91}]
            A $k$-fold triangle group $G$ with coset graphs $\Gamma_i:=\Gamma_{V_i(\mathbb{Z}/k,\mathbb{Z}/k)}$ of respective half-girths $r_i$ that satisfy
            $\frac{1}{r_1}+\frac{1}{r_2}+\frac{1}{r_3}\leq 1$
            acts isometrically, by simplicial automorphisms, on a $\cat(0)$ simplicial
            complex $X$ (see Figure \ref{complex X}) of dimension 2. The action has a 2-simplex as a strict fundamental domain, which is isometric to a Euclidean or hyperbolic triangle with angles ($\pi/r_1$, $\pi/r_2$, $\pi/r_3$). The links of the vertices in $X$ are isomorphic to the coset graphs $\Gamma_i$, and the stabilisers of the vertices are isomorphic to the vertex groups $V_i$.
        \end{thm}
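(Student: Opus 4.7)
The plan is to invoke the theory of complexes of groups (following Bridson--Haefliger, Chapter III.$\mathcal{C}$) together with Gromov's link condition. First, I would encode the triangle-of-groups data (vertex groups $V_i$, edge groups isomorphic to $\mathbb{Z}/k$, trivial face group $F$) as a complex of groups over the 2-simplex; its fundamental group is precisely $G$. The goal is to show this complex is developable with a $\cat(0)$ universal development $X$ on which $G$ acts by simplicial automorphisms with the prescribed stabilisers on each stratum.

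Next, I would geometrise the fundamental 2-simplex as the unique (up to isometry) Euclidean or hyperbolic triangle $T$ with vertex angles $\pi/r_i$: existence follows from the standing hypothesis, with equality $\tfrac{1}{r_1}+\tfrac{1}{r_2}+\tfrac{1}{r_3}=1$ giving the Euclidean case and strict inequality the hyperbolic case. The complex $X$ is assembled by gluing copies of $T$ equivariantly along edges and vertices according to the complex-of-groups combinatorics, and $G\backslash X$ has $T$ as a strict fundamental domain. Vertex stabilisers come out to be the $V_i$ and edge stabilisers to be the edge groups $\mathbb{Z}/k$, by construction.

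The main technical step is the link computation. At a vertex $v$ of type $i$ with stabiliser $V_i$, corners of triangles in the star of $v$ correspond bijectively with cosets $gF \in V_i/F \cong V_i$. Two corners share an edge in $\link(v)$ iff the corresponding triangles share one of the two edges at $v$, which happens iff their cosets lie in a common coset of one of the two edge groups $\mathbb{Z}/k$ meeting at $v$. This incidence relation is exactly the one defining the bipartite coset graph $\Gamma_i = \Gamma_{V_i(\mathbb{Z}/k, \mathbb{Z}/k)}$. Moreover, each edge of $\link(v)$ inherits length $\pi/r_i$, the angle at $v$ in $T$; since $\Gamma_i$ is bipartite with half-girth $r_i$, its combinatorial girth is $2r_i$, so the metric girth of $\link(v)$ is exactly $2\pi$.

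Finally, I would apply Gromov's link condition: every vertex link is a metric graph of girth $\geq 2\pi$, and links of higher-dimensional cells are trivial or discrete, so $X$ is locally $\cat(0)$. Combined with simple connectedness of the universal development, the Cartan--Hadamard theorem gives that $X$ is globally $\cat(0)$. The hard part will be invoking (or re-proving, if one wants the argument self-contained) developability of non-positively curved complexes of groups -- this is Bridson--Haefliger's Theorem III.$\mathcal{C}$.4.17 -- and carefully verifying that the local combinatorics of the development really match the coset graph $\Gamma_i$ as defined.
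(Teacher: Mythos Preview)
The paper does not prove this theorem; it is stated as background and attributed to Stallings \cite{Stallings91}, so there is no in-paper proof to compare your proposal against. Your outline is the standard modern route via Bridson--Haefliger's theory of non-positively curved complexes of groups and is correct in its overall shape: encode the data as a complex of groups over a $2$-simplex, metrise the fundamental domain with angles $\pi/r_i$, compute the local development at each vertex to identify the link with the coset graph $\Gamma_i$, and then invoke developability (BH III.$\mathcal{C}$.4.17) together with the link condition and Cartan--Hadamard.

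One small wording issue to tighten: in your link computation you write ``two corners share an edge in $\link(v)$'', but corners of $2$-cells at $v$ are the \emph{edges} of $\link(v)$, not its vertices; what you mean is that two such corners are adjacent (share a vertex of $\link(v)$) iff the corresponding triangles share an edge of $X$ at $v$. With that corrected, your identification of $\link(v)$ with $\Gamma_i$ goes through: link-vertices are cosets of the two edge groups, link-edges are cosets of the trivial face group (i.e.\ elements of $V_i$), and incidence is inclusion. Your girth computation is also correct: half-girth $r_i$ gives combinatorial girth $2r_i$, and with edge length $\pi/r_i$ the metric girth is exactly $2\pi$, so the link condition holds with equality in the Euclidean case and strict inequality once any $r_i$ exceeds the threshold.
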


    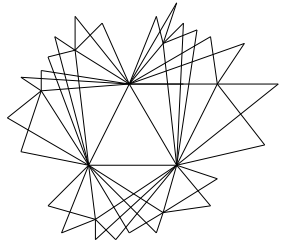
\begin{figure}[!ht]
	\centering
	\resizebox{0.3\textwidth}{!}{%
		\begin{circuitikz}
			\tikzstyle{every node}=[font=\LARGE]
			\draw [short] (8.5,11.5) -- (7,8.5);
			\draw [short] (7,8.5) -- (10.25,8.5);
			\draw [short] (8.5,11.5) -- (10.25,8.5);
			\draw [short] (8.5,11.5) -- (11.75,11.5);
			\draw [short] (11.75,11.5) -- (10.25,8.5);
			\draw [short] (7,8.5) -- (5.25,11.25);
			\draw [short] (5.25,11.25) -- (8.5,11.5);
			\draw [short] (7,8.5) -- (6.5,12.75);
			\draw [short] (6.5,12.75) -- (8.5,11.5);
			\draw [short] (8.5,11.5) -- (9.75,13);
			\draw [short] (9.75,13) -- (10.25,8.5);
			\draw [short] (7,8.5) -- (9.75,6.75);
			\draw [short] (9.75,6.75) -- (10.25,8.5);
			\draw [short] (10.25,8.5) -- (7.25,6.5);
			\draw [short] (7.25,6.5) -- (7,8.5);
			\draw [short] (6.5,12.75) -- (7.5,13.75);
			\draw [short] (7.5,13.75) -- (8.5,11.5);
			\draw [short] (6.5,12.75) -- (6.5,14);
			\draw [short] (6.5,14) -- (8.5,11.5);
			\draw [short] (8.5,11.5) -- (9.5,14);
			\draw [short] (9.5,14) -- (9.75,13);
			\draw [short] (8.5,11.5) -- (10.5,11.5);
			\draw [short] (8.5,11.5) -- (10.25,14.5);
			\draw [short] (10.25,14.5) -- (9.75,13);
			\draw [short] (8.5,11.5) -- (11.5,13.25);
			\draw [short] (11.5,13.25) -- (11.75,11.5);
			\draw [short] (8.5,11.5) -- (12.75,13);
			\draw [short] (12.75,13) -- (11.75,11.5);
			\draw [short] (11.75,11.5) -- (13.5,9.25);
			\draw [short] (13.5,9.25) -- (10.25,8.5);
			\draw [short] (10.25,8.5) -- (14,11.5);
			\draw [short] (14,11.5) -- (11.75,11.5);
			\draw [short] (10.25,8.5) -- (11.5,7);
			\draw [short] (11.5,7) -- (9.75,6.75);
			\draw [short] (10.25,8.5) -- (11.75,8);
			\draw [short] (11.75,8) -- (9.75,6.75);
			\draw [short] (7,8.5) -- (9.5,6);
			\draw [short] (9.5,6) -- (9.75,6.75);
			\draw [short] (7,8.5) -- (8.5,6);
			\draw [short] (8.5,6) -- (9.75,6.75);
			\draw [short] (10.25,8.5) -- (8,5.75);
			\draw [short] (8,5.75) -- (7.25,6.5);
			\draw [short] (7.25,6.5) -- (7.25,5.75);
			\draw [short] (7.25,5.75) -- (10.25,8.5);
			\draw [short] (7,8.5) -- (5.5,7);
			\draw [short] (5.5,7) -- (7.25,6.5);
			\draw [short] (7,8.5) -- (6,6);
			\draw [short] (6,6) -- (7.25,6.5);
			\draw [short] (5.25,11.25) -- (4.5,9);
			\draw [short] (4.5,9) -- (7,8.5);
			\draw [short] (5.25,11.25) -- (4,10.25);
			\draw [short] (7,8.5) -- (4,10.25);
			\draw [short] (5.25,11.25) -- (5.25,12);
			\draw [short] (5.25,12) -- (8.5,11.5);
			\draw [short] (5.25,11.25) -- (4.5,11.75);
			\draw [short] (4.5,11.75) -- (8.5,11.5);
			\draw [short] (7,8.5) -- (5.5,12.5);
			\draw [short] (5.5,12.5) -- (6.5,12.75);
			\draw [short] (7,8.5) -- (5.75,13.25);
			\draw [short] (5.75,13.25) -- (6.5,12.75);
			\draw [short] (9.75,13) -- (10.5,13.75);
			\draw [short] (10.5,13.75) -- (10.25,8.5);
			\draw [short] (9.75,13) -- (11,13.5);
			\draw [short] (11,13.5) -- (10.25,8.5);
		\end{circuitikz}
	}%

        \caption{A part of the complex $X$ for a 3-fold triangle group}
        \label{complex X}
    \end{figure}

    \begin{rmk}
         As $X$ is a $\cat(0)$ complex, every torsion subgroup of $G$ is conjugate into a vertex stabiliser.
         \label{gtg torsion}
    \end{rmk}

    \subsection{Automaticity}

    In this section, we define automaticity, following \cite{Epstein92}.  Informally, a group is considered automatic if all of its elements can be iteratively constructed by a simple recipe, encoded through \textit{finite state automata}, which we are also able to use to check if two words in a specific presentation give the same element or differ by right multiplication by a single generator.
    
\begin{defn}
    A \emph{finite state automaton over S} is a finite directed graph $\Gamma=\Gamma(A,E)$, with a labeling of the edges given by $\phi:E\to\mathcal{P}(S^*)$ and:
    \begin{itemize}
        \item a vertex $a_0 \in A$ called the \emph{start state of  $\Gamma$};
        \item a subset $A_\infty\subseteq A$ called \emph{the accept states of  $\Gamma$}.
    \end{itemize}

We say that a set of words $\Lambda\subseteq S^*$ forms a \emph{regular language} for the finite state automaton $\Gamma$ if it is \emph{accepted} by $\Gamma$, i.e. if for every word $w\in\Lambda$ there is an edge path in $\Gamma$ from the start state $a_0$ to an accept state $a$, whose labels make up $w$.
\end{defn}
We are now ready to give a definition of automaticity for which we use a characterisation from \cite{Epstein92} (see Theorem 2.3.5).

\begin{rmk}
    For a word $w\in\Lambda$, $w=s_1s_2\dots s_n$ we will use $w(i)$ to denote the word up to the $i$-th letter, i.e. $w(i)=s_1s_2\dots s_i$ and $\overline{w}$ to denote the group element represented by $w$.
\end{rmk}

\begin{defn}
            Let $G$ be a finitely generated group.  An \emph{automatic structure for $G$} is a pair $(S,\Lambda)$, where $S$ is a symmetric finite generating set for $G$ and $\Lambda$ is a regular language, such that the following conditions hold:
            \begin{enumerate}
                \item Every element of $G$ is represented by some word in $\Lambda$.
                \item There is a uniform constant $\delta$ such that for every $w,w'\in\Lambda$ with $w'=ws$, for some $s\in S$, $d(\overline{w(i)},\overline{w'(i)})\leq \delta$, for every $i\geq 1$.

            \end{enumerate}
            \label{automaticity_defn}
        \end{defn}
        \begin{rmk}
            Property 2 is often referred to as the \textit{fellow traveller property}. If the same holds not only for words differing by a right multiplication by a single generator, but also by a left multiplication by a single generator, then the group in question is called \textit{biautomatic}.
        \end{rmk}

    \subsection{Cone types}

        Cone types play an important role in the theory of automatic structures. At a group element $g$, the cone type encodes the geodesics from the identity that pass through $g$. Here we give a definition and the main connection to automaticity. For more details see \cite{Calegari11} or \cite{Cannon84}.

        \begin{defn}
            For a group $G$ with a generating set $S$, we define the \textit{cone type} of an element $g\in G$  as: $$\cone{g}=\{h\in G|\text{ there is a geodesic in $\Cay(G,S)$ from id to } gh \text{ which passes through } g\}.$$
            The cone in the sense above is always centered at $e$. We can also looked at the non-centered notion, which we will just call the \textit{cone} of $g$:
            $$\overline{\cone{g}}=\{h\in G|\text{ there is a geodesic in $\Cay(G,S)$ from id to } h \text{ which passes through } g\}.$$
        \end{defn}
        \begin{rmk}
        One can easily see that $g \cone{g}=\overline{\cone{g}}.$
        \end{rmk}
        \begin{thm}[Cannon \cite{Cannon84}]
            In a finitely generated group $G$ with finitely many cone types, the language of all geodesics is regular. Furthermore, if $S$ is a finite, symmetric generating set for $G$ with a fixed total order $\leq$, then the language of lexicographically first (with respect to the order $\leq$) geodesics is also regular.

            \label{Cannon}
        \end{thm}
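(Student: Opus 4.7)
My plan is to construct finite state automata for both languages, using cone types as the primary state data.

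For the regularity of all geodesics, I would take as states the cone types of $G$ (finitely many by hypothesis), plus a single fail state. The start state is $\cone{1}$, all non-fail states are accepting, and the transition from state $C$ on letter $s$ goes to the cone type of $gs$, where $g$ is any element with $\cone{g} = C$, provided $s \in C$; otherwise it goes to the fail state. The essential lemma is that $\cone{gs}$ depends only on $\cone{g}$ and $s$ whenever $|gs| = |g|+1$, which I would prove by observing that $h \in \cone{gs}$ if and only if $sh \in \cone{g}$ together with $|sh| = |h|+1$: this follows by expanding the defining equation $|gsh| = |gs|+|h|$. An induction on word length then shows that the automaton accepts exactly the geodesic words.

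For the lex-first claim, I would start from the following local characterisation: $w = s_1\cdots s_n$ is lex-first if and only if for every $i$, the letter $s_i$ is the lex-smallest element of $S$ that occurs as a first letter of some geodesic for the suffix element $\overline{s_i\cdots s_n}$. Indeed, any lex-smaller geodesic representative of $\overline{w}$ would have to first diverge from $w$ at some position $i$, and its suffix from position $i$ is then a geodesic representing $\overline{s_i\cdots s_n}$ whose first letter is strictly smaller than $s_i$. To check this condition by a finite automaton, I would refine the cone-type states by additional finite local data sufficient to determine the first-letter set at a suffix element, and process the word from right to left so that suffixes are built up by prepending generators. Since $\overline{s_i\cdots s_n}^{-1}$ is obtained from $\overline{s_{i+1}\cdots s_n}^{-1}$ by right-multiplication by $s_i^{-1}$, the forward cone-type lemma above applies to the sequence of inverses and gives finite-state transitions for the refined data; reversal closure of regular languages then yields a left-to-right automaton for the lex-first language, which intersected with the geodesic automaton proves the claim.

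The main obstacle is ensuring that this refined state data can be chosen to take finitely many values and to evolve in a genuinely finite-state fashion. The difficulty is that the first-letter set at an element is controlled by length changes under left multiplication, which is not directly encoded in the forward cone type; a naive bounded-radius length profile runs into the problem that updating the profile of radius $N$ at the next position requires the profile of radius $N+1$ at the current position. The standard remedy is to track both $\cone{g}$ and $\cone{g^{-1}}$ together with a carefully chosen bounded profile of local length changes, exploiting that the inverse side evolves by right-multiplication as the word is traversed from the right; verifying that this combined invariant is simultaneously finite and finite-state is the technical heart of the second part of the proof.
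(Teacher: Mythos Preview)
The paper does not prove this theorem at all: it is quoted as a result of Cannon and used as a black box, so there is no ``paper's own proof'' to compare against. I will therefore assess your argument on its own merits.

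Your treatment of the first assertion is correct and standard. The lemma that $\cone{gs}$ depends only on $\cone{g}$ and $s$ whenever $|gs|=|g|+1$ is exactly right, and your proof via $h\in\cone{gs}\iff sh\in\cone{g}\text{ and }|sh|=|h|+1$ is the usual one.

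Your proposal for the second assertion, however, has a genuine gap that you have identified but not closed. The local characterisation of lex-first geodesics is correct, and the reversal idea is natural, but the quantity you actually need --- the first-letter set $\{s\in S:\ |h^{-1}s|=|h^{-1}|-1\}$ --- is \emph{not} determined by $\cone{h^{-1}}$: the cone type records only the generators with $|h^{-1}s|=|h^{-1}|+1$, and nothing in the hypothesis forces the remaining generators to split between the $0$ and $-1$ cases in a way governed by the cone type. Your suggested remedy of adjoining a bounded length profile runs into precisely the obstacle you describe: to update the radius-$N$ profile at $h^{-1}$ after right-multiplying by $s_{i-1}^{-1}$ you must know $|h^{-1}s_{i-1}^{-1}a|-|h^{-1}|$ for all $|a|\le N$, i.e.\ the radius-$(N{+}1)$ profile, and neither $\cone{h^{-1}}$ nor $\cone{h}$ supplies the missing layer. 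Tracking both cone types simultaneously does not help, because $\cone{h}$ evolves under \emph{left} multiplication as you prepend letters, and you have no finite-state update rule for that. In short, the ``technical heart'' you postpone is the entire content of the second claim, and as written the argument does not establish it. In the literature this step is typically handled either in contexts with an additional fellow-traveller hypothesis (as in Cannon's original hyperbolic setting) or via a more delicate automaton that simultaneously tracks a nondeterministically chosen competing geodesic; you would need to supply one of these.
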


\section{Relating combinatorial and $\cat(0)$ geodesics}

     The main step in understanding cone types and proving Theorem \ref{automaticity thm} is relating the geodesics in the Cayley graph (combinatorial geodesics) to the geodesics in the cell complex ($\cat(0)$ geodesics). This relies on the combinatorial Gauss--Bonnet formula discussed below. The formula was first proven in \cite{BB96}, and later observed in \cite{MW02}. Here, we use the angle structure setting introduced by Wise in \cite{Wise04}.

    \subsection{Combinatorial Gauss--Bonnet theorem and angle structures}

        \begin{defn}
            Let $Y$ be a 2-complex with angles, i.e. non-negative real numbers $\angle c$ attached to every corner $c$ of a 2-cell in $Y$. We denote by $|\partial f|$ the length of the attaching map of a 2-cell $f$ in $Y$.
            The curvature at a vertex $v\in Y^{(0)}$ is:
            \begin{equation}
                \kappa(v) = 2\pi-\pi \chi(\link{v})-\left(\sum_{c\in \Corners(v)} \angle c\right).
            \end{equation}
            The curvature at a face $f\in Y^{(2)}$ is:
            \begin{equation}
                \kappa(f) = \left( \sum_{c\in \Corners(v)} \angle c \right)-(|\partial f|-2)\pi \,.
            \end{equation}
        \end{defn}
        
        \begin{thm}[Combinatorial Gauss--Bonnet]
            Let $Y$ be a finite 2-complex with angles. With the notation as above:
        
            \begin{equation}
                \sum_{v\in Y^{(0)}} \kappa(v) + \sum_{f\in Y^{(2)}} \kappa(f) = 2\pi \chi(Y).
            \end{equation}
            \label{CGB}
        \end{thm}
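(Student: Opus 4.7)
The plan is to prove this by direct computation: both curvatures split as an angle contribution plus a purely combinatorial contribution, and the angle terms cancel when the vertex and face sums are combined. First, I would sum the vertex curvatures to obtain
\begin{equation*}
\sum_{v \in Y^{(0)}} \kappa(v) \;=\; 2\pi |Y^{(0)}| \;-\; \pi \sum_{v \in Y^{(0)}} \chi(\link{v}) \;-\; \sum_{c} \angle c,
\end{equation*}
where the last sum runs over all corners of $Y$, each of which is attached to exactly one vertex. Summing the face curvatures symmetrically gives
\begin{equation*}
\sum_{f \in Y^{(2)}} \kappa(f) \;=\; \sum_{c} \angle c \;-\; \pi \sum_{f \in Y^{(2)}} |\partial f| \;+\; 2\pi |Y^{(2)}|,
\end{equation*}
using the analogous fact that each corner lies on exactly one $2$-cell and that a $2$-cell $f$ contributes exactly $|\partial f|$ corners. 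Adding the two identities, the term $\sum_c \angle c$ cancels.

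Since $\chi(Y) = |Y^{(0)}| - |Y^{(1)}| + |Y^{(2)}|$, after this cancellation the theorem reduces to the purely topological identity
\begin{equation*}
\sum_{v \in Y^{(0)}} \chi(\link{v}) \;+\; \sum_{f \in Y^{(2)}} |\partial f| \;=\; 2|Y^{(1)}|.
\end{equation*}
Verifying this identity is the heart of the proof.

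To this end, recall that $\link{v}$ is a graph whose vertices are in bijection with the edge-ends at $v$ (an edge with both endpoints at $v$ contributing two vertices of the link) and whose edges are in bijection with $\Corners(v)$. Hence
\begin{equation*}
\chi(\link{v}) \;=\; \#\{\text{edge-ends at } v\} \;-\; |\Corners(v)|.
\end{equation*}
Summing over $v \in Y^{(0)}$: the first term contributes $2|Y^{(1)}|$, because each $1$-cell has exactly two ends, while the second contributes $\sum_f |\partial f|$, since each corner belongs to a unique face $f$ and each $f$ carries exactly $|\partial f|$ corners. Substituting yields the required identity, and the theorem follows.

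The one subtlety is in handling non-injective attaching maps, where a single geometric edge may be traversed several times by $\partial f$ or may have both endpoints identified at the same vertex; since the definitions count incidences (edge-ends, corners) rather than geometric cells, the bookkeeping above remains valid without modification. This is where I would be most careful in writing up the proof, but I do not expect any genuine obstacle beyond fixing conventions.
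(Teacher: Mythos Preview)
Your proof is correct. Note, however, that the paper does not actually supply its own proof of this theorem: it is stated with references to \cite{BB96} and \cite{MW02} and then used as a tool. Your argument is the standard double-counting proof (separate the angle contribution from the combinatorial contribution, cancel the angles, and reduce to the link identity $\sum_v \chi(\link{v}) + \sum_f |\partial f| = 2|Y^{(1)}|$), which is essentially what one finds in those references as well. Your remark about non-injective attaching maps is well placed; counting edge-ends and corners rather than geometric cells is exactly the right bookkeeping and no further work is needed.
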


    To be able to use the tools above for a $k$-fold triangle group $G$, we need to introduce a suitable 2-dimensional cell complex $C$.
    
    Firstly, let $X$ be a $\cat(0)$ cell complex associated to a $k$-fold triangle group $G$, $k\geq2$. Denote the vertex groups of $G$ by $V_1=\langle a,b \rangle$, $V_2=\langle b,c\rangle$ and $V_3=\langle c,a\rangle$ and the edge groups by $E_1=\langle c\rangle$, $E_2=\langle a\rangle$, and $E_3=\langle b\rangle$. For simplicity of the argument, we redefine the metric on $X$ to be such that each triangle is a Euclidean equilateral triangle, which does not change any important properties of X.

    Furthermore, let $\Cay(G,S)$ be the Cayley graph of $G$ with respect to the generating set $S=\{a,\dots ,({k-1})a,b,\dots,(k-1)b,c,\dots,(k-1)c\}$. Although $\Cay(G,S)$ is not embedded in $X$ (because of torsion), $G$ still acts transitively on the faces of $X$, so there is a correspondence between the faces of $X$ and the elements of $G$, i.e. the vertices in $\Cay(G,S)$. Furthermore, there is a map $i: \Cay(G,S)\to X$, which injectively maps vertices to the centres of triangles in $X$ and maps edges of the Cayley graph to the geodesic segments connecting the relevant centres. This correspondence also allows us to define a \textit{cone type of $t_g\in X^{(2)}$}, the face of $X$ corresponding to an element $g\in G$:
    $$\cone{t_g}=\{t_h\in X^{(2)}|h\in\cone{g}\},$$ where h is the element of $G$ corresponding to $t_h$.

    For $x\in X^{(0)}$, denote by $L^*(x)$ the subgraph of $\Cay(G,S)$ dual to the link $\link{x}$.

   We can now define $C$ as a 2-dimensional cell complex whose 1-skeleton is $\Cay(G,S)$. A 2-cell is attached along every loop in every $L^*(x)$ in $\Cay(G,S)$, which is immersed in $X$. Also, for every edge stabiliser $\mathbb{Z}/k\mathbb{Z}$ of $G$, and every $a,b\in \mathbb{Z}/k\mathbb{Z}$, a triangle is attached with vertices $-a,-b, a+b$.
   
   The metric on $X$ induces the angle structure on $C$. By construction, the triangular cells will have all angles equal to 0, whereas the other cells will have all angles equal to $\frac{2\pi}{3}$.
        
    \subsection{Relating the combinatorial and the metric structure of $G$}

    In this section we prove Lemma \ref{catacomb}. It provides the key geometric insight, by redefining the distance function on $G$ as the number of edges crossed by a $\cat(0)$ geodesic in $X$:
    
     \begin{replemma}{catacomb}
    	Let $G$ be a $k$-fold triangle group and $X$ the corresponding triangle complex. A (combinatorial) geodesic in $\Cay(G,S)$ is of length $n$ $\iff$ any $\cat(0)$-geodesic between the interiors of the corresponding faces in $X$ crosses exactly $n$ edges of $X$.
    \end{replemma}

    \begin{rmk}
        In the case that a $\cat(0)$ geodesic $\gamma$ passes through a vertex $x\in X^{(0)}$ we will look at that as $\gamma$ crossing the smallest sequence of triangles between the one where $\gamma$ enters $x$ and the one where $\gamma$ exits $x$. Similarly, if $\gamma$ passes through a path $p=(p_0,...,p_n)\in X^{(1)}$ we will look at that as $\gamma$ crossing the smallest sequence of triangles between the one where $\gamma$ enters $p_0$ and the one where $\gamma$ exits $p_n$. Such a sequence of triangles is not necessarily unique.

        \label{rmk induced geod}
    \end{rmk}  

Because the triangles of $X$ correspond to vertices in $\Cay(G,S)$, any $\cat(0)$-geodesic $\gamma$ in $X$ induces a path $g_\gamma$ in $\Cay(G,S)$, where crossing an edge between two triangles corresponds to traversing the suitable edge of the Cayley graph. In the case that $\gamma$ passes through a path in $X^{(1)}$, the induced combinatorial geodesic will follow the sequence of triangles from the remark above.

The proof of the lemma relies on showing that the induced path $g_\gamma$ must be a combinatorial geodesic. This is done by looking at a minimal disc diagram bounded by $g_\gamma$ and a combinatorial geodesic.

We now introduce some notions that will help us analyse disk diagrams appearing in the rest of the paper.

\begin{defn}
    Let $\mathcal{D}$ be a disk diagram and $g\subset\partial\mathcal{D}$ a part of its boundary. If a cell $c$ in $\mathcal{D}$ intersects $g$ in at least one edge, we call it a \emph{$g$-cell}. If two $g$-cells $c_1$ and $c_2$ have a non-empty intersection on $g$ we call them \emph{$g$-adjacent}.
\end{defn}

\begin{defn}
    Let $\mathcal{D}$ be a disc diagram and $g$ a path in $\partial\mathcal{D}$. Then the \textit{vertex curvature of $g$ in $\mathcal{D}$} is $$\kappa_{\mathcal{D}}^v(g)=\sum_{v\in \mathcal{D}^{(0)}\cap \interior(g)}\kappa(v)$$ and the \textit{face curvature of $g$ in $\mathcal{D}$} is $$\kappa_{\mathcal{D}}^f(g)=\sum_{\substack{f\in X^{(2)}\\ f \text{ is a $g$-cell}}}\kappa(f).$$ 
    In the special case when the faces intersecting $g$ are triangles we define the \textit{triangle curvature of $g$ in $\mathcal{D}$} as $$\kappa_{\mathcal{D}}^t(g)=\sum_{\substack{t\in X^{(2)}\\ t \text{ is a triangular $g$-cell}}}\kappa(f).$$ 
    Furthermore, if there is another path $g$ in $\partial\mathcal{D}$, $$\kappa_{\mathcal{D}}^f(g,g')=\sum_{\substack{f\in X^{(2)}\\ f\text{ is a $g$-cell}\\f \text{ is a $g'$-cell}}} \kappa(f).$$
\end{defn}

Before we move on to the proof of the lemma, we state and prove one of its ingredients, which gives more insight into paths induced by geometric geodesics:

\begin{lemma}
    Let $X$, $C$, $\gamma$ and $g_{\gamma}$ be as above and let $\mathcal{D}$ be a minimal disc diagram in $C$ such that $g_\gamma \subset \partial\mathcal{D}.$ Then $$\kappa_{\mathcal{D}}^v(g_\gamma)+\frac{1}{2}\kappa_{\mathcal{D}}^f(g_\gamma)+\frac{1}{2}\kappa_{\mathcal{D}}^t(g_\gamma)\leq \frac{2\pi}{3}.$$ 
        \label{induced geodesic}
\end{lemma}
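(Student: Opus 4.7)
The plan is to apply the Combinatorial Gauss--Bonnet theorem (Theorem \ref{CGB}) to the subcomplex $\mathcal{N}$ of $\mathcal{D}$ formed by all 2-cells whose closure meets $g_\gamma$, and then to use the CAT(0) geometry of $\gamma$ to control the "opposite" boundary terms.

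First I would argue, using the minimality of $\mathcal{D}$, that $\mathcal{N}$ is homeomorphic to a closed disc and that its boundary decomposes as $\partial\mathcal{N}=g_\gamma\cup g_\gamma^{\ast}$, where $g_\gamma^{\ast}$ is the arc lying on the far side of the faces containing $g_\gamma$. Applying Gauss--Bonnet gives
\begin{equation}
\sum_{v\in\mathcal{N}^{(0)}}\kappa(v)\;+\;\sum_{f\in\mathcal{N}^{(2)}}\kappa(f)\;=\;2\pi.
\end{equation}
The face sum is by construction exactly $\kappa_{\mathcal{D}}^{f}(g_\gamma)$. Splitting the vertex sum according to whether $v$ lies in the interior of $g_\gamma$, in the interior of $g_\gamma^{\ast}$, or at one of the two shared endpoints isolates $\kappa_{\mathcal{D}}^{v}(g_\gamma)$ in the identity, so that the desired inequality reduces to controlling the interior vertex curvatures along $g_\gamma^{\ast}$ and the two endpoint corners.

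Next I would use the CAT(0) condition on $\gamma$. Each interior vertex $v^{\ast}$ of $g_\gamma^{\ast}$ corresponds, via the dictionary between the cells of $C$ and the triangles of $X$, to a point where $\gamma$ either transits an interior edge of $X$ at a vertex of $X^{(0)}$ or lies along a short path in $X^{(1)}$ (cf.\ Remark \ref{rmk induced geod}); in either case the CAT(0) condition forces the angle of $\gamma$ at that point to be at least $\pi$ on each side, which translates through the angle structure on $C$ (angles $0$ or $\tfrac{2\pi}{3}$) into the bound $\kappa(v^{\ast})\leq 0$. The two endpoints of $g_\gamma$ are the locations where $\gamma$ starts and ends in the interior of a 2-cell, each contributing at most a single corner of angle $\tfrac{2\pi}{3}$ (or $0$ if the extremal cell is triangular). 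The factor $\tfrac{1}{2}$ on $\kappa_{\mathcal{D}}^{f}(g_\gamma)$ emerges when one reassembles these local bounds into the Gauss--Bonnet identity, since the faces bordering both $g_\gamma$ and $g_\gamma^{\ast}$ have half of their curvature already absorbed in the CAT(0) estimate used along $g_\gamma^{\ast}$.

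The main obstacle will be the translation step: converting the metric CAT(0) inequality on $\gamma$ in $X$ into combinatorial curvature bounds at the vertices of $g_\gamma^{\ast}$ in $C$. This requires a precise case analysis matching (i) transversal crossings of edges of $X$ with corners of triangular $\mathbb{Z}/k$-cells, and (ii) passes of $\gamma$ through vertices of $X^{(0)}$ with corners of the $L^{\ast}(x)$-type cells of angle $\tfrac{2\pi}{3}$, together with a careful treatment of the ambiguity described in Remark \ref{rmk induced geod} when $\gamma$ runs along a segment of $X^{(1)}$. Once this dictionary is in place, the inequality follows by direct substitution into the Gauss--Bonnet identity for $\mathcal{N}$.
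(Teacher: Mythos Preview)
Your proposal has a genuine gap that makes the argument break down, and it stems from a confusion about what the vertices on $g_\gamma^{\ast}$ represent.

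Recall that vertices of $C$ correspond to \emph{triangles} of $X$, not to vertices of $X^{(0)}$. A vertex $v^{\ast}$ lying on $g_\gamma^{\ast}$ is therefore a triangle of $X$ that $\gamma$ does \emph{not} pass through; it is one of the ``far'' vertices of a polygonal cell in $\mathcal{N}$, i.e.\ a triangle around some vertex $x\in X^{(0)}$ on the side of $x$ that $\gamma$ avoids. There is no point on $\gamma$ naturally associated with $v^{\ast}$, so the sentence ``the CAT(0) condition forces the angle of $\gamma$ at that point to be at least $\pi$ on each side'' has no content at $v^{\ast}$, and the conclusion $\kappa(v^{\ast})\leq 0$ does not follow from anything about $\gamma$.

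Worse, even if $\kappa_{\mathcal N}(v^{\ast})\leq 0$ held for every $v^{\ast}$, it would push the inequality the wrong way. Gauss--Bonnet on $\mathcal N$ gives
\[
\kappa_{\mathcal D}^{v}(g_\gamma)+\tfrac12\kappa_{\mathcal D}^{f}(g_\gamma)
\;=\;2\pi-\kappa(u)-\kappa(v)-\sum_{v^{\ast}}\kappa_{\mathcal N}(v^{\ast})-\tfrac12\kappa_{\mathcal D}^{f}(g_\gamma),
\]
so to obtain an \emph{upper} bound on the left you need a \emph{lower} bound on $\sum_{v^{\ast}}\kappa_{\mathcal N}(v^{\ast})+\tfrac12\kappa_{\mathcal D}^{f}(g_\gamma)$. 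Knowing only $\kappa_{\mathcal N}(v^{\ast})\leq 0$ and $\kappa_{\mathcal D}^{f}(g_\gamma)\leq 0$ yields nothing. What one would actually need is that most $v^{\ast}$ carry a single corner of angle $\tfrac{2\pi}{3}$ (hence positive curvature $\tfrac{\pi}{3}$), and that these positives dominate both the occasional negatively curved $v^{\ast}$ and the half of the face curvature you still have to absorb. Establishing that balance is essentially the whole difficulty, and the CAT(0) geodesicity of $\gamma$ enters in a much more delicate way than ``angle $\geq\pi$ at each $v^{\ast}$''.

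The paper's proof does not go through an auxiliary neighbourhood $\mathcal N$ at all. It bounds $\kappa_{\mathcal D}^{v}(g_\gamma)$ directly by comparing, at each vertex $p_i$ of $g_\gamma$, the direction vector of $g_\gamma$ with the direction of $\gamma$ in the underlying Euclidean triangle, and then runs a telescoping argument on the angle $\angle(\gamma(p_i),-g_\gamma(p_{i-1}))$. The CAT(0) property is used to propagate $\gamma$'s direction through segments where $\gamma$ passes a vertex of $X^{(0)}$; the $\tfrac12\kappa_{\mathcal D}^{f}$ term appears precisely to compensate for the curvature deficit at such negatively curved vertices, via an explicit estimate on the girth of the corresponding cell. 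If you want to salvage a Gauss--Bonnet-on-$\mathcal N$ approach, you will need an honest lower bound on $\sum_{v^{\ast}}\kappa_{\mathcal N}(v^{\ast})+\tfrac12\kappa_{\mathcal D}^{f}(g_\gamma)$, and obtaining one will almost certainly force you back into the same angle-tracking computation.
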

 \begin{proof}
            We will compare the direction of $g_{\gamma}$, to the direction of $\gamma$ (see Figures \ref{figure 3a}, \ref{figure 3b}).

            Take a sequence of triangles $t_i\in X, i\in\{0,\dots,n\}$ which correspond to vertices $p_i\in X, i\in\{0,\dots,n\}$ of $g_\gamma$ from Remark \ref{rmk induced geod}. Moreover, we pick a leftmost such sequence (with respect to the direction of $\gamma$). Next, take a subsequence $t_{i_j}\in X, j\in\{0,\dots,m\}$ such that $t_{i_j}\cap\gamma\neq\{pt\}$. We will denote the subset of indices $\{i_{j}\}$ by $I_s$. Since $\gamma$ starts and ends at interiors of triangles, we have that $i_0=0$ and $i_m=n$.

            We start with the definition of vertex curvature over $g_\gamma$:
        
            \begin{equation}
                \kappa_{\mathcal{D}}^v(g_{\gamma})=\sum_{i=1}^{n-1}\kappa(p_i)=\sum_{i=1}^{n-1}\left(\pi-\sum_{c\in \Corners(p_i)}\angle c\right).
                \label{g gamma curvature}
            \end{equation}
        
            We assume that $g_\gamma$ bounds the disk diagram $\mathcal{D}$ from the right, i.e. that $\mathcal{D}$ is on the left of $g_\gamma$. The sum of the corner angles at $p_i$ is bounded below by the angle between the segments $[p_{i-1},p_i]$ and $[p_i,p_{i+1}]$. Let $g_\gamma(p_{i})$ be the unit vector corresponding to the segment $[p_i,p_{i+1}]$ at $p_i$. Because $[p_i,p_{i+1}]$ can be seen in $X$ inside two adjacent triangles, so inside a part of a Euclidean plane, the vector corresponding to $[p_{i+1},p_i]$ at $p_{i+1}$ is $-g_\gamma(p_{i})$. 
            Because all the triangles in $X$ are equilateral, $\angle(g_\gamma(p_{i}),-g_\gamma(p_{i-1}))\in\{0,\frac{2\pi}{3}, \frac{4\pi}{3}\}$. Furthermore, angle zero would imply that $\gamma$ enters and exits some triangle via the same edge, which would contradict the convexity of triangles. So:

            \begin{equation}
                \angle(g_\gamma(p_{i}),-g_\gamma(p_{i-1}))\in\{\frac{2\pi}{3}, \frac{4\pi}{3}\}
                \label{angle range}.
            \end{equation}
            
            From equation \ref{g gamma curvature} we get the following inequality:
        
            \begin{equation}
               \kappa_{\mathcal{D}}^v(g_{\gamma})
                \leq\sum_{i=1}^{n-1}\pi-\angle(g_\gamma(p_{i}),-g_\gamma(p_{i-1})).
                \label{g gamma curvature 1}
            \end{equation}
        
            For each triangle in the subsequence $\{t_{i_j}\}$, $\interior(\gamma\cap t_{i_j})\neq\emptyset$, so for each $j$, we can pick a point $q_j\in\interior(\gamma\cap t_{i_j})$ and define $\gamma(p_i)$ to be the unit vector parallel to the tangent vector of $\gamma$ at $q_i$. Because the triangles are Euclidean, this is well-defined. We want to compare the directions of different vectors. For this purpose we fix an orientation on the triangle containing $p_0$ and propagate it to all the other triangles in the sequence $\{t_i\}$.
        
            Because $\gamma$ and $g_\gamma$ always cross the same edges of the underlying triangles $t_{i_j}$ in $X$, the angle between them is always $\leq \pi/2$. We separate the indices $I_s$ into two sets, depending on which side of $g_\gamma(p_{i_j})$ the vector $\gamma(p_{i_j})$ is. In the first case, $\gamma(p_i)$ is on the left and $\angle(g_\gamma(p_{i}),\gamma(p_{i}))\in[0,\frac{\pi}{2}]$. We denote the set of such indices by $I_l$. In the second case, $\gamma(p_i)$ is on the right, and $\angle(g_\gamma(p_{i}),\gamma(p_{i}))\in[\frac{3\pi}{2},2\pi)$. This set of indices will be denoted by $I_r$. Hence we can rewrite \ref{g gamma curvature 1} as:
            \begin{align}
                \kappa_{\mathcal{D}}^v(g_{\gamma})
                \leq&\sum_{i\in \{1,\dots,n-1\}\setminus I_s}\pi-\angle(g_\gamma(p_{i}),-g_\gamma(p_{i-1}))+\sum_{i\in I_l}\pi-\angle(g_\gamma(p_{i}),-g_\gamma(p_{i-1}))\nonumber\\
                &+\sum_{i\in I_r}\pi-\angle(g_\gamma(p_{i}),-g_\gamma(p_{i-1})). \label{g gamma curvature 2}
            \end{align}
        
            In the case of $I_l$, because (by \ref{angle range}) $\angle(g_\gamma(p_i) ,-g_\gamma(p_{i-1}))\geq \frac{2\pi}{3}$, we have that $\gamma(p_i)$ is between $g_\gamma(p_i)$ and $-g_\gamma(p_{i-1})$, so  
            \begin{equation}
                \angle(g_\gamma(p_{i}),-g_\gamma(p_{i-1}))=\angle(g_\gamma(p_{i}),\gamma(p_{i}))+\angle(\gamma(p_i),-g_\gamma(p_{i-1})).
            \end{equation}
        
            In the case of $I_r$, because (by \ref{angle range}) $\angle(g_\gamma(p_i) ,-g_\gamma(p_{i-1}))\leq \frac{4\pi}{3}$, we have that $\gamma(p_i)$ is between $-g_\gamma(p_{i-1})$ and $g_\gamma(p_i)$, so 
            \begin{equation}
               \angle(g_\gamma(p_{i}),-g_\gamma(p_{i-1}))=\angle(g_\gamma(p_{i}),\gamma(p_{i}))+\angle(\gamma(p_i),-g_\gamma(p_{i-1}))-2\pi. 
            \end{equation}
        
            This allows us to rewrite the bound \ref{g gamma curvature 2} for the curvature of $g_{\gamma}$ as:
            \begin{align}
                \kappa_{\mathcal{D}}^v(g_\gamma)
                    \leq
                    &\sum_{i\in \{1,\dots,n-1\}\setminus I_s}\pi-\angle(g_\gamma(p_{i}),-g_\gamma(p_{i-1}))\nonumber\\
                    &+\sum_{i\in I_l}\pi-(\angle(g_\gamma(p_{i}),\gamma(p_{i}))+\angle(\gamma(p_i),-g_\gamma(p_{i-1}))) \nonumber\\
                    +&\sum_{i\in I_r}\pi-(\angle(g_\gamma(p_{i}),\gamma(p_{i}))+\angle(\gamma(p_i),-g_\gamma(p_{i-1}))-2\pi).
                    \label{g gamma curvature 3}
            \end{align}
            
            We will now use the fact that $\gamma$ is a geodesic. If two consecutive triangles with indices in $I_s$, $t_{i_{j-1}}$ and $t_{i_{j}}$ share an edge, then $\gamma(p_{i_{j-1}})$ is parallel to $\gamma(p_{i_{j}})$ in a locally Euclidean space, so we can regard them as the same vector.
            In the case when $t_{i_{j-1}}$ and $t_{i_{j}}$ only share a vertex $v_j$, denote the sequence of triangles between them, $t_{i_{j-1}+1},\dots,t_{i_j-1}$, by $\mathcal{T}_j$ and its length by $l_j$. The sequence of triangles now forms a non-positively curved space, with negative curvature concentrated at $v_j$, through which $\gamma$ passes. Hence $\gamma(p_{i_{j-1}})$ is not necessarily parallel to $\gamma(p_{i_{j}})$ along $g_{\gamma}$. However, we still have all the needed information about the change of angles. Namely, we can show how the length of $\mathcal{T}_j$, $l_j$, affects the relevant vectors in triangles $t_{i_{j-1}}$ and $t_{i_{j}}$, which will help us simplify inequality \ref{g gamma curvature 3}.
            
            We know that $l_j> 0$, because $\gamma$ is a geodesic. If $l_j=1$, then $\gamma$ is contained in the edges of $t_{i_{j-1}}$ and $t_{i_{j}}$ not intersecting the one triangle between them, $t_{i_{j-1}+1}$. Since we know the exact positions of all the relevant vectors, we also know the following angles:
            \begin{align}
                \angle(\gamma(p_{i_j}),-g_\gamma(p_{{i_j}-1}))
                =\begin{cases}&\frac{5\pi}{6},\hspace{1.1cm} \text{if } v_j\notin\mathcal{D}\\
                &\frac{7\pi}{6},\hspace{1.1cm} \text{if } v_j\in\mathcal{D}\
                \end{cases} \text{ , and} 
                \label{vector rel 1.1}
            \end{align}
            \begin{align}
                \angle(\gamma(p_{i_{j-1}}),-g_\gamma(p_{i_{j-1}}))
                =\begin{cases}&\frac{7\pi}{6},\hspace{1.1cm} \text{if } v_j\notin\mathcal{D}\\
                &\frac{5\pi}{6},\hspace{1.2cm} \text{if } v_j\in\mathcal{D}\
                \end{cases} \text{ .} 
                \label{vector rel 1.2}
            \end{align}
            Hence, we get the following relationship between them:
            \begin{align}
                \angle(\gamma(p_{i_j}),-g_\gamma(p_{{i_j}-1}))=\begin{cases}&\angle(\gamma(p_{i_{j-1}}),-g_\gamma(p_{i_{j-1}}))-\frac{\pi}{3},\hspace{0.5cm} \text{if } v_j\notin\mathcal{D}\\
                &\angle(\gamma(p_{i_{j-1}}),-g_\gamma(p_{i_{j-1}}))+\frac{\pi}{3},\hspace{0.5cm} \text{if } v_j\in\mathcal{D}\
                \end{cases}\text{ .}
                \label{angle rel 1}
            \end{align}
            We observe that $\pm\frac{\pi}{3}$ in the equation above corresponds to the maximal curvature at $p_{i_{j-1}+1}$, and hence:
            \begin{align}
                -\angle(\gamma(p_{i_j}),-g_\gamma(p_{{i_j}-1}))+\sum_{i_{j-1}<i<i_j}\kappa(p_i)\leq-\angle(\gamma(p_{i_{j-1}}),-g_\gamma(p_{i_{j-1}})).
                \label{angle replacement 1}
            \end{align}
            
            Now assume that $l_j=2$. The geodesic $\gamma$ seen inside the space formed by triangles $t_{i_{j-1}},\dots ,t_{i_j}$ forms an angle greater than or equal to $\pi$ at the point $v_j$. If the angle is equal to $\pi$, then $v_j$ has zero curvature, and we get the following relationship between angles:
            \begin{align}
                \angle(\gamma(p_{i_j}),-g_\gamma(p_{{i_j}-1}))=\begin{cases}&\angle(\gamma(p_{i_{j-1}}),-g_\gamma(p_{i_{j-1}}))-\frac{2\pi}{3},\hspace{0.5cm} \text{if } v_j\notin\mathcal{D}\\
                &\angle(\gamma(p_{i_{j-1}}),-g_\gamma(p_{i_{j-1}}))+\frac{2\pi}{3},\hspace{0.5cm} \text{if } v_j\in\mathcal{D}\
                \end{cases}
                \label{angle rel 2.1}
            \end{align}
            As before, we observe that $\pm\frac{2\pi}{3}$ in the formula above is an upper bound on the curvature of the two vertices between $p_{i_{j-1}}$ and $p_{i_j}$ on the path of length 3, which connects them, so:
            \begin{align}
                -\angle(\gamma(p_{i_j}),-g_\gamma(p_{{i_j}-1}))+\sum_{i_{j-1}<i<i_j}\kappa(p_i)\leq-\angle(\gamma(p_{i_{j-1}}),-g_\gamma(p_{i_{j-1}})).
                \label{angle replacement 2.1}
            \end{align}

            Now assume that $l_j$ is still 2, but the angle of $\gamma$ at $v_j$ is strictly greater than $\pi$. Then $v_j$ has negative curvature, and the angle difference is in between the previous two cases. Namely, we get the following relationship between angles:
            \begin{align}
                \angle(\gamma(p_{i_j}),-g_\gamma(p_{{i_j}-1}))\geq\begin{cases}&\angle(\gamma(p_{i_{j-1}}),-g_\gamma(p_{i_{j-1}}))-\frac{2\pi}{3},\hspace{0.5cm} \text{if } v_j\notin\mathcal{D}\\
                &\angle(\gamma(p_{i_{j-1}}),-g_\gamma(p_{i_{j-1}}))+\frac{\pi}{3},\hspace{0.5cm} \text{if } v_j\in\mathcal{D}\
                \end{cases}
                \label{angle rel 2.2}
            \end{align}
            If $v_j\notin \mathcal{D}$, then, as before, $-\frac{2\pi}{3}$ in the formula above is an upper bound on the curvature of the two vertices between $p_{i_{j-1}}$ and $p_{i_j}$ on the path of length 3, which connects them. On the other hand, if $v_j\in \mathcal{D}$, then both $p_{i_{j-1}+1}$ and $p_{i_{j-1}+2}$ can be positively curved, but we only get $-\frac{\pi}{3}$ curvature from the formula above for offsetting the vertex curvature. However, because $v_j$ is negatively curved, there is a cell $c_j$ in $\mathcal{D}$ of girth at least 8 containing vertices $p_{i_{j-1}+1}$ and $p_{i_{j-1}+2}$, which contributes the other $-\frac{\pi}{3}$ for offsetting $\sum_{i_{j-1}<i<i_j}\kappa(p_i)$. Hence:
            \begin{align}
                -\angle(\gamma(p_{i_j}),-g_\gamma(p_{{i_j}-1}))+\sum_{i_{j-1}<i<i_j}\kappa(p_i)+\frac{1}{2}\kappa(c_j)\leq-\angle(\gamma(p_{i_{j-1}}),-g_\gamma(p_{i_{j-1}})).
                \label{angle replacement 2.2}
            \end{align}
            
            Finally, when $l_j\geq3$, vectors $\gamma(p_{i_j})$ and $\gamma(p_{i_j-1})$ independently have an angle $\frac{\pi}{3}$ of freedom in their respective triangles. Since $\gamma$ and $g_\gamma$ always enter and exit the triangles of $X$ through the same edge, we get the following estimates:
            \begin{align}
                \angle(\gamma(p_{i_j}),-g_\gamma(p_{{i_j}-1}))
                \in\begin{cases}&[\frac{\pi}{2},\frac{5\pi}{6}],\hspace{1.1cm} \text{if } v_j\notin\mathcal{D}\\
                &[\frac{7\pi}{6},\frac{3\pi}{2}],\hspace{0.9cm} \text{if } v_j\in\mathcal{D}\
                \end{cases} \text{ , and} 
                \label{vector rel3.1}
            \end{align}
            \begin{align}
                \angle(\gamma(p_{i_{j-1}}),-g_\gamma(p_{i_{j-1}}))
                \in\begin{cases}&[\frac{7\pi}{6},\frac{3\pi}{2}],\hspace{1.1cm} \text{if } v_j\notin\mathcal{D}\\
                &[\frac{\pi}{2},\frac{5\pi}{6}],\hspace{1.2cm} \text{if } v_j\in\mathcal{D}\
                \end{cases} \text{ .} 
                \label{vector rel3.2}
            \end{align}
            Consequently:
            \begin{equation}
                -\angle(\gamma(p_{i_j}),-g_\gamma(p_{{i_j}-1}))\leq\begin{cases}
                &-\angle(\gamma(p_{i_{j-1}}),-g_\gamma(p_{i_{j-1}}))+\pi,\hspace{1cm} \text{if } v_j\notin\mathcal{D}\\
                &-\angle(\gamma(p_{i_{j-1}}),-g_\gamma(p_{i_{j-1}}))-\frac{\pi}{3},\hspace{0.9cm} \text{if } v_j\in\mathcal{D}.
            \end{cases}
            \label{angle rel 3}
            \end{equation}
            When $v_j\notin\mathcal{D}$ the vertices of the path around $v_j$ are negatively curved and there are $l_j\geq3$ of them, so $\sum_{i_{j-1}<i<i_j}\kappa(p_i)\leq-\pi$ and hence:
            \begin{equation}
                 -\angle(\gamma(p_{i_j}),-g_\gamma(p_{{i_j}-1}))+\sum_{i_{j-1}<i<i_j}\kappa(p_i)\leq
                -\angle(\gamma(p_{i_{j-1}}),-g_\gamma(p_{i_{j-1}})).
                \label{angle replacement 3.1}
            \end{equation}
            On the other hand, when $v_j\in\mathcal{D}$, since $\mathcal{D}$ is reduced, the $l_j$ vertices around $v_j$ are positively curved and belong to the same cell, $c_j\in X^{(2)}$. This cell may contain 0,1 or 2 $g_\gamma$-adjacent triangles.
            
            Since the path of triangles chosen in the beginning was the leftmost one, the girth of the cell $c_{j}$ has to be strictly larger than $2l_j$. Moreover, if $c_j$ has no $g_\gamma$-adjacent triangles, the girth of it has to be strictly larger than $2l_j+2$. So, the curvature of $c_j$ is:
            \begin{align}
                \kappa(c_{j_k})\leq \begin{cases}&-(2l_j+2-6)\frac{\pi}{3},\hspace{1.1cm} \text{if $c_j$ has a $g_\gamma$-adjacent triangle}\\
                &-(2l_j+4-6)\frac{\pi}{3},\hspace{1.1cm} \text{if $c_j$ has no $g_\gamma$-adjacent triangles}\
                \end{cases} \text{ .} 
                \label{cell curv approx}
            \end{align}
            This further means that:
            \begin{align}
                \frac{1}{2}\kappa(c_{j_k})\leq \begin{cases}&-(l_{j_k}-2)\frac{\pi}{3},\hspace{1.1cm} \text{if $c_j$ has a $g_\gamma$-adjacent triangle}\\
                &-(l_{j_k}-1)\frac{\pi}{3},\hspace{1.1cm} \text{if $c_j$ has no $g_\gamma$-adjacent triangles}\
                \end{cases} \text{ .} 
                \label{cell curv approx 2}
            \end{align}
            When $c_j$ has no $g_\gamma$-adjecent triangles, the cell curvature from the formula above covers $l_j-1$ positively curved vertices, and the final one is covered by the angle difference in formula \ref{angle rel 3}.
            If $c_j$ has a $g_\gamma$-adjecent triangle, then the cell curvature from the formula above covers $l_j-2$ positively curved vertices; one more is covered by the angle difference in formula \ref{angle rel 3}, and the final one is covered by half the curvature of the triangle, $t_j$. Note that we only use half as two cells can share a $g$-adjacent triangle. Finally, we get:
            \begin{align}
                -\angle(\gamma(p_{i_j}),-g_\gamma(p_{{i_j}-1}))+\sum_{i_{j-1}<i<i_j}\kappa(p_i)+\frac{1}{2}\kappa(c_{j})+\kappa(t_j) \leq
                -\angle(\gamma(p_{i_{j-1}}),-g_\gamma(p_{i_{j-1}}))
                \label{angle replacement 3.2}
            \end{align}

            Finally, we incorporate estimates \ref{angle replacement 1}, \ref{angle replacement 2.1}, \ref{angle replacement 2.2}, \ref{angle replacement 3.1} and \ref{angle replacement 3.2} into \ref{g gamma curvature 3} and add the face curvature (where all the cells we don't need to offset the vertex curvature can be ignored, as all the cells of $\mathcal{D}$ are non-positively curved). This gives us:
            \begin{align}
                \kappa_{\mathcal{D}}^v(g_\gamma)+\frac{1}{2}\kappa_{\mathcal{D}}^f(g_\gamma) + \frac{1}{2}\kappa_{\mathcal{D}}^t(g_\gamma)
                    \leq -&\angle(\gamma(p_0),-g_\gamma(p_0)) + \angle(\gamma(p_{i_{m-1}}),-g_\gamma(p_{i_{m-1}})) \nonumber\\
                    +&\sum_{i\in I_l}\pi-(\angle(g_\gamma(p_{i}),\gamma(p_{i}))+\angle(\gamma(p_i),-g_\gamma(p_{i}))) \nonumber\\
                    +&\sum_{i\in I_r}\pi-(\angle(g_\gamma(p_{i}),\gamma(p_{i}))+\angle(\gamma(p_i),-g_\gamma(p_{i}))-2\pi). 
            \end{align}
        
            Now, in the first case, we have that $\gamma(p_i)$ is between $g_\gamma(p_i)$ and $-g_\gamma(p_i)$, so 
            \begin{equation}                                 \angle(g_\gamma(p_{i}),\gamma(p_{i}))+\angle(\gamma(p_i),-g_\gamma(p_{i}))=\angle(g_\gamma(p_{i}),-g_\gamma(p_{i}))=\pi.
            \end{equation}
        
            In the second case, $\gamma(p_i)$ is between $-g_\gamma(p_{i})$ and $g_\gamma(p_i)$, so 
            \begin{equation}                        \angle(g_\gamma(p_{i}),\gamma(p_{i}))+\angle(\gamma(p_i),-g_\gamma(p_{i}))=\angle(g_\gamma(p_{i}),-g_\gamma(p_{i}))+2\pi=\pi+2\pi. 
            \end{equation}
        
            Hence, the only non-zero contribution to the curvature bound comes from the $0$-th and the $(n-1)$-st term:
            \begin{align}
                \kappa_{\mathcal{D}}^v(g_\gamma)+\frac{1}{2}\kappa_{\mathcal{D}}^f(g_\gamma)+\frac{1}{2}\kappa_{\mathcal{D}}^t(g_\gamma)
                    \leq -\angle(\gamma(p_0),-g_\gamma(p_0)) + \angle(\gamma(p_{i_{m-1}}),-g_\gamma(p_{i_{m-1}})).
            \end{align}
        
            In general $\angle(\gamma(p_{i_j}),-g_\gamma(p_{i_j}))\in[\frac{\pi}{2},\frac{3\pi}{2}]$ for $0\leq j\leq m-1$, but since we have that $\gamma$ starts and ends at interiors of triangles, we have sharper bounds for the end cases: $\angle(\gamma(p_0),-g_\gamma(p_0)), \angle(\gamma(p_{i_{m-1}}),-g_\gamma(p_{i_{m-1}}))\in(\frac{\pi}{2},\frac{3\pi}{2})$. So:
        
            \begin{align}
                \kappa_{\mathcal{D}}^v(g_\gamma)+\frac{1}{2}\kappa_{\mathcal{D}}^f(g_\gamma)+\frac{1}{2}\kappa_{\mathcal{D}}^t(g_\gamma)< \pi.
            \end{align}
        
            As the direction of $g_{\gamma}$ can only change for a multiple of $\frac{\pi}{3}$ at a time, we can sharpen the estimate above to get the wanted one:
        
            \begin{align}
                 \kappa_{\mathcal{D}}^v(g_\gamma)+\frac{1}{2}\kappa_{\mathcal{D}}^f(g_\gamma)+\frac{1}{2}\kappa_{\mathcal{D}}^t(g_\gamma)
                    \leq \frac{2\pi}{3}.
            \end{align}

            
            \begin{figure}[!ht]
            \centering
            \begin{subfigure}[t]{0.65\linewidth}
            \resizebox{1\textwidth}{!}
                {%
                \begin{circuitikz}
                \tikzstyle{every node}=[font=\normalsize]
                \draw [short] (3.25,7.25) -- (5,10.5);
                \draw [short] (5,10.5) -- (7,7.25);
                \draw [short] (7,7.25) -- (3.25,7.25);
                \draw [short] (5,10.5) -- (8.75,10.5);
                \draw [short] (8.75,10.5) -- (7,7.25);
                \draw [ color={rgb,255:red,189; green,20; blue,20}, ->, >=Stealth] (5,8.5) -- (7,9.25)node[pos=0.5, fill=white]{$g_{\gamma}$};
                \draw [ color={rgb,255:red,20; green,161; blue,189}, short] (5,9.5) -- (13.5,7)node[pos=0.5, fill=white]{$\gamma$};
                \end{circuitikz}
                }%
                \caption{starting direction of $g_\gamma$ with respect to $\gamma$}
                \label{figure 3a}
            \end{subfigure}
            \hfill 
            \begin{subfigure}[t]{0.27\linewidth}
            \resizebox{1\textwidth}{!}
                {%
                \begin{circuitikz}
                \tikzstyle{every node}=[font=\normalsize]
                \draw [ color={rgb,255:red,20; green,161; blue,189}, ->, >=Stealth] (5.25,8) -- (9.25,8)node[pos=0.5, fill=white]{$\gamma$};
                \draw [ color={rgb,255:red,232; green,206; blue,38}, short] (5.25,11.25) -- (5.25,5);
                \draw [ color={rgb,255:red,189; green,20; blue,20}, ->, >=Stealth] (5.25,8) -- (8.75,10)node[pos=0.5, fill=white]{$g_\gamma$};
                \draw [ color={rgb,255:red,189; green,20; blue,20}, ->, >=Stealth, dashed] (5.25,8) -- (5.5,11.25);
                \draw [ color={rgb,255:red,189; green,20; blue,20}, ->, >=Stealth, dashed] (5.25,8) -- (8.5,5.75);
                \node [font=\normalsize, color={rgb,255:red,189; green,20; blue,20}] at (5.75,8.75) {$\pi/3$};
                \node [font=\normalsize, color={rgb,255:red,189; green,20; blue,20}] at (6,8) {$\pi/3$};
                \end{circuitikz}
                }%
                \caption{all possible directions of $g_\gamma$ with respect to $\gamma$}
                \label{figure 3b}
            \end{subfigure}
           \caption{Comparing the directions of $\gamma$ and $g_{\gamma}$}
            \label{fig:my_label}
            \end{figure}
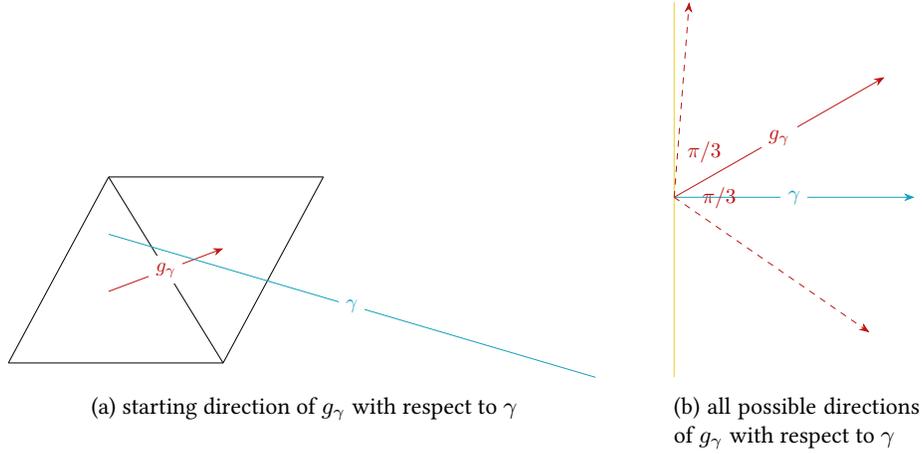
            
\end{proof}

We can now move on to the proof of Lemma \ref{catacomb}.  
 
 \begin{proof}[Proof of Lemma \ref{catacomb}]
      Let $g$ be a geodesic in $\Cay(G,S)$ from a vertex $u$ to a vertex $v$ and let $\gamma$ be the unique geodesic in $X$ from a point $p_u$ in triangle $u$ to a point $p_v$ in triangle $v$. Further, let $g_{\gamma}$ be the path in $\Cay(G,S)$ induced by $\gamma$. 
        
            We want to show that $g_{\gamma}$ is also a geodesic in $\Cay(G,S)$, i.e. that we can geodesically transform $g$ into $g_{\gamma}$. This is done by observing the two combinatorial paths inside the cell complex $C$ and doing induction on the area between them.
            
            Take the first segment $[t_1,t_2]$ such that $g$ and $g_{\gamma}$ only meet at $g(t_1)=g_{\gamma}(t_1)=u'$ and $g(t_2)=g_{\gamma}(t_2)=v'$. On this segment, the paths bound a disc diagram $\mathcal{D}$ over the cell complex $C$. Moreover, we can take the disc diagram to be of minimal area. Then the area between the two paths will be the sum of the areas of minimal disc diagrams over all segments where the paths are disjoint.
        
            We now look at the curvature of the faces and vertices of $\mathcal{D}$.

            By construction of $C$, $\mathcal{D}$ contains two types of cells. The first type are $n$-gons of size $\geq6$, which by the formula for curvature of faces have non-positive curvature $-(n-6)\frac{\pi}{3}$. The second type are triangles, coming from the order $k$-elements. Since their corners have angle zero, the triangles will be negatively curved with curvature $-\pi$.
            
            By minimal area, the interior vertices can also be shown to be of non-positive curvature. Namely, if a vertex $v$ is in the interior of $\mathcal{D}$, then $\link{v}$ is either a loop, which needs to be of length at least $2\pi$, or it is a tree, which means that $\mathcal{D}$ could be further reduced, contradicting the minimal area assumption.
        
            Consider a vertex $v$ on the boundary. It has $$\sum_{c\in \Corners(v)}c=s \frac{2\pi}{3},  s\geq0,$$ because all the interior angles of the complex $C$ are either $\frac{2\pi}{3}$ or 0. We will show that $s\geq 1$. The case $s=0$ is excluded by disjointness at ends, geodesicity of $g$ or underlying geodesic of $g_{\gamma}$. Namely, if $v\in \{u',v'\}$ is such that the sum of the corners at $v$ is zero, it would mean that $g$ and $g_\gamma$ intersect at a whole segment containing $u'$ or $v'$, which was excluded by a restriction above. Further, $v\in g$ with corner angle zero would mean backtracking, contradicting the combinatorial geodesicity of $g$. Finally, we saw in the proof of Lemma \ref{induced geodesic} that $\gamma$ being a $\cat(0)$ geodesic excludes the case of corner angle zero on $g_\gamma$.
            Hence, for a vertex $v$ on the boundary, $$\sum_{c\in \Corners(v)}c=s \frac{2\pi}{3},  s\geq1,$$ which corresponds to $\kappa(v)=\pi-s\frac{2\pi}{3}, s\geq1$. Particularly, the curvature at vertices $u',v'$ is $\kappa(u'),\kappa(v')\leq\frac{\pi}{3}$.
        
            Further, by Lemma \ref{induced geodesic}, \begin{equation}
                \kappa_{\mathcal{D}}^v(g_\gamma)+\frac{1}{2}\kappa_{\mathcal{D}}^f(g_\gamma)+\frac{1}{2}\kappa_{\mathcal{D}}^t(g_\gamma)\leq \frac{2\pi}{3}.
            \end{equation}
            By the combinatorial Gauss--Bonnet theorem for $\mathcal{D}$, we then have the following:
            \begin{equation}
                2\pi \chi(\mathcal{D}) =\sum_{v\in \partial \mathcal{D}^{(0)}} \kappa(v) + \sum_{v\in \interior\mathcal{D}^{(0)}} \kappa(v) - \sum_{f\in \mathcal{D}^{(2)}} \kappa(f).
                \label{gb}
            \end{equation}
            This can further be rewritten as:
            \begin{align}
             2\pi\chi(\mathcal{D}) =\kappa_{\mathcal{D}}^v(g)&+\kappa_{\mathcal{D}}^v(g_{\gamma}) +\kappa(u') + \kappa(v') +\sum_{v\in \interior\mathcal{D}^{(0)}} \kappa(v) \nonumber\\ 
                &+ \kappa^f_{\mathcal{D}}(g)+ \kappa^f_{\mathcal{D}}(g_{\gamma})-\kappa_{\mathcal{D}}^f(g,g_\gamma)+\sum_{\substack{f\in \mathcal{D}^{(2)}\\ f \text{is not a $g$-cell}\\ \text{or a $g_\gamma$-cell}}}\kappa(f).
                \label{gb2}
            \end{align}
            
            We know that $\kappa(u')\leq\frac{\pi}{3}$,$\kappa(v')\leq\frac{\pi}{3}$,  $\chi(\mathcal{D})=1$, and vertices in the interior as well as all the faces have non-positive curvature. Incorporating these estimates into the equation \ref{gb2} gives the following: 
            \begin{equation}
                \kappa_{\mathcal{D}}^v(g)+\kappa_{\mathcal{D}}^v(g_{\gamma})  +\kappa^f_{\mathcal{D}}(g)+ \kappa^f_{\mathcal{D}}(g_{\gamma})-\kappa_{\mathcal{D}}^f(g,g_\gamma)\geq \frac{4\pi}{3}.
                \label{g, g gamma curvature approx}
            \end{equation}
            All the face curvature is negative and the cells that are both $g$-cells and $g_\gamma$-cells can never be triangular, hence: 
            \begin{equation}\kappa^f_{\mathcal{D}}(g_{\gamma})\leq\kappa_{\mathcal{D}}^f(g,g_\gamma)+\kappa^t_{\mathcal{D}}(g_{\gamma}),
            \end{equation}
            This further means that:
            \begin{equation}
            \kappa_{\mathcal{D}}^v(g_{\gamma})+\kappa^f_{\mathcal{D}}(g_{\gamma})-\frac{1}{2}\kappa_{\mathcal{D}}^f(g,g_\gamma)\leq\kappa_{\mathcal{D}}^v(g_{\gamma})+\frac{1}{2}\kappa^f_{\mathcal{D}}(g_{\gamma})+\frac{1}{2}\kappa^t_{\mathcal{D}}(g_{\gamma}).
            \label{g gamma approx}
            \end{equation}
            Moreover, by lemma \ref{induced geodesic}, $\kappa_{\mathcal{D}}^v(g_{\gamma})+\frac{1}{2}\kappa_{\mathcal{D}}^f(g_\gamma)+\frac{1}{2}\kappa_{\mathcal{D}}^t(g_\gamma)\leq \frac{2\pi}{3}$. Combining this with the approximation \ref{g gamma approx}, we can further transform inequality \ref{g, g gamma curvature approx} into::

            \begin{equation}
                \kappa_{\mathcal{D}}^v(g)+\kappa^f_{\mathcal{D}}(g)-\frac{1}{2}\kappa^f_{\mathcal{D}}(g,g_{\gamma}) \geq \frac{2\pi}{3}.
                \label{catacomb lowerbound 0}
            \end{equation}

            Now we will have a closer look at the curvature coming from $g$. Assume that there are $m_i$ $n_i$-gons, $n_i\geq6$, that are $g|_{[u',v']}$-cells . Then they must intersect $g|_{[u',v']}$ in a connected sequence of edges, because $g|_{[u',v']}$ is a combinatorial geodesic and the disc diagram is reduced. The edges of $g|_{[u',v']}$ preceding and following this sequence can either belong to another $n_i'$-gon or to a triangle. We differentiate between $n_i$-gons which have zero, one or two $g$-adjacent triangles.
        
            Denote those respectively by $n_i^{(0)}$-gons, $n_i^{(1)}$-gons and $n_i^{(2)}$-gons. Further, denote the number of each of them by $m_i^{(0)}$, $m_i^{(1)}$ and $m_i^{(2)}$, where $m_i^{(0)}+m_i^{(1)}+m_i^{(2)}=m_i.$ Finally, denote the number triangular $g|_{[u',v']}$-cells by $s$. We can see that $s\geq\sum_i\frac{m_i^{(1)}+2m_i^{(2)}}{2}$. As previously mentioned, the curvature of each triangle is $\kappa(t)=-\pi.$ and the curvature of each $n_i^{(j)}$-gon $-(n_i^{(j)}-6)\frac{\pi}{3}$. Additionally, as $\mathcal{D}$ has no cut points, none of the triangles can be shared cells with $g_\gamma$, so we get their full curvature. On the other hand, the $n_i$-gons can be shared, so we might not get their full curvature, but we get at least half of it. Hence, \ref{catacomb lowerbound 0} transforms into the following:
            \begin{equation}
                \kappa_{\mathcal{D}}^v(g)\geq \frac{2\pi}{3}+\frac{1}{2}\sum_{i}\sum_{0\leq j\leq 2}m_i^{(j)}(n_i^{(j)}-6)\frac{\pi}{3}+s\pi.
                \label{catacomb lowerbound}
            \end{equation}
        
            We now analyse the curvature of the vertices on $g|_{(u',v')}$, to give a contradicting upper bound.
        
            By previous analysis, the vertices on the boundary can take curvature $\frac{\pi}{3}-s\frac{2\pi}{3},$ $s\geq0$.
        
            If there are $\frac{n_i^{(0)}}{2}-1$ consecutive vertices of positive curvature on an $n_i^{(0)}$-gon, we can retract over it (see Figure \ref{contracting a cell}), getting a disk of smaller area which satisfies the same conditions and hence allows us to argue by induction. If there are more than $\frac{n_i^{(0)}}{2}-1$ consecutive vertices of positive curvature, a similar move can reduce $g$ to a shorter path, contradicting the fact that it is a combinatorial geodesic. Thus, we can assume that there are at most $\frac{n_i^{(0)}}{2}-2$ consecutive vertices of positive curvature on any $n_i^{(0)}-$gon.
            
            \begin{figure}[!ht]
                \centering
                \resizebox{0.5\textwidth}{!}{%
                \begin{circuitikz}
                \tikzstyle{every node}=[font=\normalsize]
                \draw [short] (3.75,9) -- (6.25,9);
                \draw [short] (6.25,9) -- (7.5,11.5);
                \draw [short] (7.5,11.5) -- (10,11.5);
                \draw [short] (10,11.5) -- (11.25,9);
                \draw [dashed] (6.25,9) -- (7.5,6.5);
                \draw [dashed] (7.5,6.5) -- (10,6.5);
                \draw [dashed] (10,6.5) -- (11.25,9);
                \draw [ color={rgb,255:red,181; green,181; blue,181}, ->, >=Stealth, dashed] (6.5,9.25) -- (6.5,8.75);
                \draw [ color={rgb,255:red,181; green,181; blue,181}, ->, >=Stealth, dashed] (7,10.25) -- (7,7.75);
                \draw [ color={rgb,255:red,181; green,181; blue,181}, ->, >=Stealth, dashed] (7.5,11.25) -- (7.5,6.75);
                \draw [ color={rgb,255:red,181; green,181; blue,181}, ->, >=Stealth, dashed] (8.25,11.5) -- (8.25,6.5);
                \draw [ color={rgb,255:red,181; green,181; blue,181}, ->, >=Stealth, dashed] (9,11.5) -- (9,6.5);
                \draw [ color={rgb,255:red,181; green,181; blue,181}, ->, >=Stealth, dashed] (9.75,11.5) -- (9.75,6.5);
                \draw [ color={rgb,255:red,181; green,181; blue,181}, ->, >=Stealth, dashed] (10.25,10.75) -- (10.25,7.25);
                \draw [ color={rgb,255:red,181; green,181; blue,181}, ->, >=Stealth, dashed] (10.75,9.75) -- (10.75,8.25);
                \draw [short] (11.25,9) -- (13.75,9);
                \node [font=\normalsize] at (7.25,11.75) {$\kappa(v_1)>0$};
                \node [font=\normalsize] at (10,11.75) {$\kappa(v_2)>0$};
                \end{circuitikz}
                }%
                
                \caption{Retracting over a hexagon with two positively curved vertices and no $g$-adjacent triangles.}
                \label{contracting a cell}
            \end{figure}
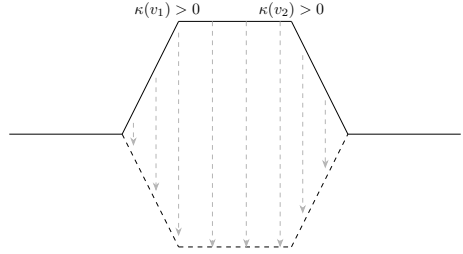
        
            In the case of $n_i^{(1)}$-gons, having $\frac{n_i^{(1)}}{2}-1$ adjacent vertices of curvature $\frac{\pi}{3}$ might not be enough to do a retraction move, because one of the positive vertices might come from a place where the adjacent triangle connects to the cell.  Nevertheless, having $\frac{n_i^{(1)}}{2}$ consecutive vertices of positive curvature is always enough to retract and always produces a shorter path (because the move can be followed by a retraction over a triangle; see the paragraph below), leading to a contradiction. So, we can allow at most $\frac{n_i^{(1)}}{2}-1$ consecutive vertices of positive curvature on any $n_i^{(1)}-$gon.

            Whenever we are in the situation of having an $n$-gon with a $g$-adjacent triangle such that the vertex where they connect is positively curved, we are able to modify the disk diagram so that the $g$-adjacent triangle not only shares an edge with $g$, but also with the $n$-gon. (See Figure \ref{retriangulating}.) Namely, the only other cells incident to that vertex can be triangles, because the one $n$-gon already uses up all the corner angle amount we are allowed for the curvature to stay positive. The sequence of triangles from the $n$-gon to the $g$-adjacent triangle forms an $l$-gon (for some $l$) which is triangulated and can be retriangulated so that the $g$-adjacent triangle also shares an edge with the $n$-gon and none of the properties of the disk diagram are changed (it is still of minimal area and bounded by $g$). 

            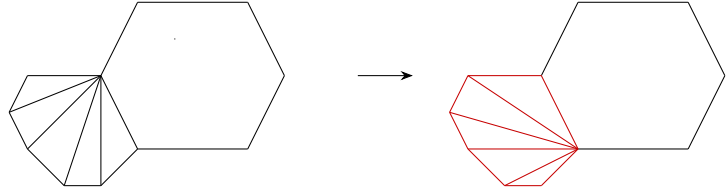
\begin{figure}[!ht]
                
                \centering
                \resizebox{0.8\textwidth}{!}{
                \begin{circuitikz}
                \tikzstyle{every node}=[font=\normalsize]
                \draw [short] (4,12) -- (4,12);
                \draw [short] (3.5,12.5) -- (5,12.5);
                \draw [short] (5,12.5) -- (5.5,11.5);
                \draw [short] (5.5,11.5) -- (5,10.5);
                \draw [short] (5,10.5) -- (3.5,10.5);
                \draw [short] (3.5,10.5) -- (3,11.5);
                \draw [short] (3,11.5) -- (3.5,12.5);
                \draw [short] (1.75,11) -- (3,11.5);
                \draw [short] (3,11.5) -- (2,10.5);
                \draw [short] (2,10.5) -- (1.75,11);
                \draw [short] (2,10.5) -- (2.5,10);
                \draw [short] (2.5,10) -- (3,11.5);
                \draw [short] (3,11.5) -- (3,10);
                \draw [short] (3,10) -- (2.5,10);
                \draw [short] (3,10) -- (3.5,10.5);
                \draw [short] (1.75,11) -- (2,11.5);
                \draw [short] (2,11.5) -- (3,11.5);
                \draw [short] (9,11.5) -- (9.5,12.5);
                \draw [short] (9.5,12.5) -- (11,12.5);
                \draw [short] (11,12.5) -- (11.5,11.5);
                \draw [short] (11.5,11.5) -- (11,10.5);
                \draw [short] (11,10.5) -- (9.5,10.5);
                \draw [ color={rgb,255:red,199; green,0; blue,0}, short] (9,11.5) -- (9.5,10.5);
                \draw [ color={rgb,255:red,199; green,0; blue,0}, short] (9.5,10.5) -- (9,10);
                \draw [ color={rgb,255:red,199; green,0; blue,0}, short] (9.5,10.5) -- (8,11.5);
                \draw [ color={rgb,255:red,199; green,0; blue,0}, short] (8,11.5) -- (9,11.5);
                \draw [ color={rgb,255:red,199; green,0; blue,0}, short] (8,11.5) -- (7.75,11);
                \draw [ color={rgb,255:red,199; green,0; blue,0}, short] (7.75,11) -- (9.5,10.5);
                \draw [ color={rgb,255:red,199; green,0; blue,0}, short] (9.5,10.5) -- (8,10.5);
                \draw [ color={rgb,255:red,199; green,0; blue,0}, short] (8,10.5) -- (7.75,11);
                \draw [ color={rgb,255:red,199; green,0; blue,0}, short] (8,10.5) -- (8.5,10);
                \draw [ color={rgb,255:red,199; green,0; blue,0}, short] (8.5,10) -- (9.5,10.5);
                \draw [ color={rgb,255:red,199; green,0; blue,0}, short] (8.5,10) -- (9,10);
                \draw [->, >=Stealth] (6.5,11.5) -- (7.25,11.5);
                \end{circuitikz}
                }
                \caption{Retriangulating to get a $g$-adjacent triangle which shares an edge with the polygon.}

            \label{retriangulating}
                
            \end{figure}
            
            Finally, for $n_i^{(2)}$-gons, if we account for the $g$-adjacent triangles, we are looking to retract over an $(n_i^{(2)}+2)$-gon. (See Figure \ref{retracting with triangles}.) For this we also need at least $\frac{n_i^{(2)}}{2}$ consecutive vertices of positive curvature. Hence, $n_i^{(2)}$-gons can also admit at most $\frac{n_i^{(2)}}{2}-1$ consecutive vertices of positive curvature.

            \begin{figure}[!ht]
                \centering
                \resizebox{0.35\textwidth}{!}{%
                \begin{circuitikz}
                \tikzstyle{every node}=[font=\normalsize]
                \draw [short] (7.5,12.75) -- (7.5,7.75);
                \draw [short] (7.5,7.75) -- (10,6.5);
                \draw [short] (10,6.5) -- (12.5,7.75);
                \draw [short] (12.5,7.75) -- (12.5,12.75);
                \draw [short] (7.5,7.75) -- (6.25,10.25);
                \draw [short] (12.5,7.75) -- (13.75,10.25);
                \draw [ color={rgb,255:red,199; green,0; blue,0}, line width=0.6pt, short] (6.25,10.25) -- (7.5,12.75);
                \draw [ color={rgb,255:red,199; green,0; blue,0}, line width=0.6pt, short] (7.5,12.75) -- (10,14);
                \draw [ color={rgb,255:red,199; green,0; blue,0}, line width=0.6pt, short] (10,14) -- (12.5,12.75);
                \draw [ color={rgb,255:red,199; green,0; blue,0}, line width=0.6pt, short] (12.5,12.75) -- (13.75,10.25);
                \draw [ color={rgb,255:red,171; green,171; blue,171}, line width=0.6pt, ->, >=Stealth, dashed] (6.75,11.25) -- (6.75,9.5);
                \draw [ color={rgb,255:red,171; green,171; blue,171}, line width=0.6pt, ->, >=Stealth, dashed] (7.25,12) -- (7.25,8.5);
                \draw [ color={rgb,255:red,171; green,171; blue,171}, line width=0.6pt, ->, >=Stealth, dashed] (8.25,13) -- (8.25,7.5);
                \draw [ color={rgb,255:red,171; green,171; blue,171}, line width=0.6pt, ->, >=Stealth, dashed] (9,13.25) -- (9,7.25);
                \draw [ color={rgb,255:red,171; green,171; blue,171}, line width=0.6pt, ->, >=Stealth, dashed] (9.75,13.75) -- (9.75,6.75);
                \draw [ color={rgb,255:red,171; green,171; blue,171}, line width=0.6pt, ->, >=Stealth, dashed] (10.5,13.5) -- (10.5,7);
                \draw [ color={rgb,255:red,171; green,171; blue,171}, line width=0.6pt, ->, >=Stealth, dashed] (11.25,13.25) -- (11.25,7.25);
                \draw [ color={rgb,255:red,171; green,171; blue,171}, line width=0.6pt, ->, >=Stealth, dashed] (12,13) -- (12,7.75);
                \draw [ color={rgb,255:red,171; green,171; blue,171}, line width=0.6pt, ->, >=Stealth, dashed] (12.75,12) -- (12.75,8.5);
                \draw [ color={rgb,255:red,171; green,171; blue,171}, line width=0.6pt, ->, >=Stealth, dashed] (13.25,11) -- (13.25,9.5);
                \node [font=\normalsize, color={rgb,255:red,199; green,0; blue,0}] at (13.5,11.5) {g};
                \node [font=\normalsize, color={rgb,255:red,199; green,0; blue,0}] at (7.25,13.25) {$\kappa$(v_1)>0};
                \node [font=\normalsize, color={rgb,255:red,199; green,0; blue,0}] at (9.75,14.25) {$\kappa$(v_2)>0};
                \node [font=\normalsize, color={rgb,255:red,199; green,0; blue,0}] at (12.5,13.25) {$\kappa$(v_3)>0};
                \end{circuitikz}
                }%
                \caption{Retracting over a polygon with two $g$-adjacent triangles.}

                \label{retracting with triangles}
            \end{figure}
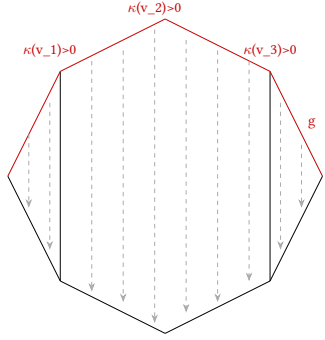

            So far, we produced an upper limit on the number of positively curved vertices on $g|_{(u',v')}$, now we look into what we can say about the negatively curved ones. Each sequence of consecutive vertices of positive curvature on $g|_{(u',v')}$ coming from an $n_i^{(0)}$-gon is between two vertices of negative curvature (unless it is at one of the ends of the segment we're considering). Similarly, each sequence of positive vertices on $g|_{(u',v')}$ coming from an $n_i^{(1)}$-gon, has a negative vertex on one of the sides (unless at an end). Hence, we have at least $\sum_i\frac{2m_i^{(0)}+m_i^{(1)}-2}{2}\frac{\pi}{3}$ negative curvature on $g|_{(u',v')}$.
        
            The observations above imply the following upper bound on the curvature:
            \begin{align}
                \kappa_{\mathcal{D}}^v(g) \leq &\sum_i \left( m_i^{(0)}\left(\frac{n_i^{(0)}}{2}-2\right)\frac{\pi}{3} + m_i^{(1)}\left(\frac{n_i^{(1)}}{2}-1\right)\frac{\pi}{3} + m_i^{(2)} \left(\frac{n_i^{(2)}}{2}-1\right) \frac{\pi}{3}\right) \nonumber\\ 
                &-\sum_i\frac{2m_i^{(0)}+m_i^{(1)}-2}{2}\frac{\pi}{3}.
            \end{align}
            This can further be rewritten as:
            \begin{align}
                \kappa_{\mathcal{D}}^v(g) \leq &\sum_i\left( m_i^{(0)}\left(\frac{n_i^{(0)}}{2}-3\right)\frac{\pi}{3} + m_i^{(1)}\left(\frac{n_i^{(1)}}{2}-\frac{3}{2}\right)\frac{\pi}{3} + m_i^{(2)}\left(\frac{n_i^{(2)}}{2}-1\right) \frac{\pi}{3}\right)\nonumber\\
                &+\frac{\pi}{3} -s\pi+s\pi.
            \end{align}
            Finally, we get:
            \begin{align}
                \kappa_{\mathcal{D}}^v(g) \leq &\sum_i\left( m_i^{(0)}\left(\frac{n_i^{(0)}}{2}-3\right)\frac{\pi}{3} + m_i^{(1)}\left(\frac{n_i^{(1)}}{2}-3\right)\frac{\pi}{3} + m_i^{(2)}\left(\frac{n_i^{(2)}}{2}-4\right)\frac{\pi}{3}\right) \nonumber\\
                &+\frac{\pi}{3}+s\pi.
            \end{align}
            Which can be rewritten as:
            \begin{align}
                \kappa_{\mathcal{D}}^v(g) & \leq \frac{\pi}{3} + \frac{1}{2}\sum_i\sum_{0\leq j\leq2} m_i^{(j)}\left(n_i^{(j)}-6\right)\frac{\pi}{3} +s\pi,
            \end{align}
            contradicting the lower bound \ref{catacomb lowerbound} from before.
        
        \end{proof}

 \section{Cone types of $k$-fold triangle groups}

   In this section, we use Lemma \ref{catacomb} to show the finiteness of cone types for non-positively curved $k$-fold triangle groups. For this purpose, we introduce some additional notions, used to better understand and work with the local and global structures of $X$.
   
    Take $d_e:X^{(2)}\to\mathbb{Z}_{\geq0}$ to be the function on the faces of $X$ induced by the length function on $G$, i.e. distance from the vertex $e$ in $\Cay(G,S)$ (in $X$ this is the distance from a base face). Similarly to $L^*(x)$ defined in Section 3.1, we can define $\mathcal{L}^*(x)$ to be the directed graph on the same set of vertices as $L^*(x)$, where $e=(t_1,t_2)$ is an edge from $t_1\in X^{(2)}$ to $t_2\in X^{(2)}$ if $d_e(t_2)=d_e(t_1)+1$. The map $i$ sends $L^*(x)\subset \Cay(G,S)$ to $i(L^*(x))\subset i(\Cay(G,S))\subset X$.

    Similarly, for $A\subseteq X^{(0)}$, denote by $\mathcal{L}^*(A)=\bigcup_{x\in A}\mathcal{L}^*(x)$.
    \begin{defn}
        We say that a triangle $t\in X^{(2)}$ at $v\in X^{(0)}$ is \textit{minimal} if $d_e(t)\leq d_e(t')$ for every other triangle $t'\in X^{(2)}$ containing $v$.
    \end{defn}
    \begin{rmk}
        As triangles $t\in X^{(2)}$ at $v\in X^{(0)}$ can be identified with vertices of $L^*(v)$ (or equivalently vertices of $\mathcal{L}^*(v)$) we can also talk about \emph{minimal vertices} of $L^*(v)$, respectively $\mathcal{L}^*(v)$.
    \end{rmk}

     We are now ready to state some corollaries of Lemma \ref{catacomb} which elaborate on the interplay between the metric and combinatorial structure of $G$:
    \begin{lemma}
        A $\cat(0)$ geodesic in $X$ between a point $p_e$ in the base face $e$ and a vertex $v\in X^{(0)}$ approaches $v$ from a minimal triangle at $v$. (In other words, the geodesic intersects a minimal triangle at more than just a vertex $v$.) Moreover, if there are more minimal triangles at $v$, they are all adjacent.
        \label {cor1}
    \end{lemma}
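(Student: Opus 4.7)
The plan is to use Lemma \ref{catacomb} as a dictionary between the combinatorial distance $d_e$ and edge--crossing counts of $\cat(0)$ geodesics, and then to carry out a short perturbation argument near $v$.

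Let $\gamma$ be the $\cat(0)$ geodesic from $p_e$ to $v$, let $n$ be the number of edges of $X$ it crosses in the sense of Remark \ref{rmk induced geod}, and let $\xi\in\link{v}$ be the terminal direction of $\gamma$ at $v$. The point $\xi$ either lies in the interior of a link-edge, in which case a single triangle $t_0$ is met by $\gamma$ in more than just $v$, or $\xi$ is a vertex of $\link{v}$ corresponding to an edge $e'\subset X^{(1)}$, in which case the triangles met by $\gamma$ in more than just $v$ are precisely the triangles containing $e'$, and any two of them share $e'$. Call these the triangles \emph{approached} by $\gamma$. I will show that the approached triangles are exactly the minimal triangles at $v$, which immediately gives both conclusions of the lemma.

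For an approached triangle $t_0$ and a small $\epsilon>0$, pick $p_\epsilon\in\mathrm{int}(t_0)$ at $\cat(0)$-distance $\epsilon$ from $v$ in a direction close to $\xi$ inside the link-edge of $t_0$. Then the $\cat(0)$ geodesic from $p_e$ to $p_\epsilon$ differs from $\gamma$ only by a perturbation near $v$ that stays inside $\overline{t_0}$, hence crosses exactly the same $n$ edges as $\gamma$. Lemma \ref{catacomb} gives $d_e(t_0)=n$. Conversely, let $t$ be any triangle at $v$ that is not approached, pick $p_\epsilon\in\mathrm{int}(t)$ near $v$, and let $\gamma_\epsilon$ be the unique $\cat(0)$ geodesic from $p_e$ to $p_\epsilon$. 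As $\epsilon\to 0$, $\gamma_\epsilon\to\gamma$, yet the terminal direction of $\gamma_\epsilon$ at $v$ lies in the interior of the link-edge of $t$, which is at positive $\cat(1)$-distance from $\xi$. Since every edge of $\link{v}$ has length $\pi/3$ and the link is $\cat(1)$, any $\cat(1)$-geodesic in $\link{v}$ from $\xi$ to a direction in the link-edge of $t$ must traverse at least one link-vertex, so $\gamma_\epsilon$ must transversally cross at least one edge of $X$ incident to $v$ that $\gamma$ does not cross. Consequently $\gamma_\epsilon$ crosses at least $n+1$ edges of $X$, and Lemma \ref{catacomb} yields $d_e(t)\geq n+1>d_e(t_0)$.

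The main obstacle, and the part deserving the most care, is the second step: turning the strictly positive $\cat(1)$ link-distance between $\xi$ and the terminal direction of $\gamma_\epsilon$ into at least one genuine extra transversal edge-crossing of $\gamma_\epsilon$ in $X$. This requires ruling out short-cuts near $v$, and uses essentially the $\cat(1)$ structure of the link (girth at least $2\pi$, and in particular no loops of length less than $2\pi$), together with the fact that link-edges have length $\pi/3$ so that any non-trivial motion in the link necessarily passes through a link-vertex. Once the second step is in place, the first step is a direct check, and the adjacency clause follows from the description of the approached triangles as either a single triangle or the triangles sharing the edge $e'$.
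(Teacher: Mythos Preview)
Your approach is essentially the same as the paper's: perturb the endpoint into interiors of nearby triangles, invoke continuity of $\cat(0)$ geodesics, and apply Lemma~\ref{catacomb}. The paper also splits into the two cases according to whether $\gamma$ meets $v$ through the interior of a triangle or along an edge.

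There is, however, an overclaim. You assert that the approached triangles are \emph{exactly} the minimal triangles and that each has $d_e(t_0)=n$. The lemma only needs the inclusion ``minimal $\subseteq$ approached'', and that is all the paper proves: in the edge case it shows that every non-approached triangle has strictly larger $d_e$ than \emph{some} approached one, so the minimal triangles lie among the $t_1,\dots,t_k$ sharing the edge $e'=[v,w]$, hence are pairwise adjacent. It does \emph{not} claim that all of $t_1,\dots,t_k$ are minimal, and indeed notes only that their pairwise distances differ by at most one. Your justification that the perturbed geodesic ``differs from $\gamma$ only by a perturbation near $v$ that stays inside $\overline{t_0}$'' does not control what happens near the other endpoint $w$ of the edge: since $\gamma$ runs along the whole segment $[w,v]$, the geodesic to $p_\epsilon\in\mathrm{int}(t_i)$ must leave $\gamma$ at (or before) $w$, and the number of edges it crosses there depends on the link geometry at $w$ and on which $t_i$ you chose. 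Relatedly, your $n$ is not well defined, since Remark~\ref{rmk induced geod} concerns geodesics passing \emph{through} cells of $X^{(1)}$, not terminating at a vertex.

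The fix is minor: drop the equality and argue only that every non-approached $t$ satisfies $d_e(t)>d_e(t_0)$ for some approached $t_0$; this already yields both conclusions of the lemma, since the approached triangles are a single triangle or a family sharing the edge $e'$.
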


    \begin{proof}
        Let $\gamma$ be the $\cat(0)$ geodesic in $X$ between a point $p_e$ in the base face $e$ and a vertex $v\in X^{(0)}$. Assume $\gamma$ approaches $v$ from a triangle $t$ in $X$. 
        
        Suppose that $\gamma$ goes through the interior of $t$. Look at the $\epsilon$-ball around $v$ in $X$, $B(v,\epsilon)$. Take a point $p_{t'}\in B(v,\epsilon)\cap t',$ for some triangle $t'\neq t$ at $v$. Consider the $\cat(0)$ geodesic $\gamma_{t'}$ in $X$ between $p_e$ and $p_{t'}$.

        In a $\cat(0)$ space, geodesics whose ends are close stay uniformly close to each other. Hence, by making $\epsilon$ small enough, we can ensure that $\gamma_{t'}$ crosses the same set of edges as $\gamma$ up to the neigbourhood of $v$, which by Lemma \ref{catacomb} means that $d_e(t)< d_e(t')$. If $\gamma$ passes through some points or edges of $X^{(1)}$ on the way from $p_e$ to $v$, the induced combinatorial path will still have the same number of edges as the combinatorial path induced by $\gamma_{t'}$, because the set of triangles whose interiors they cross is the same conclusive with $t$. 
        
        Suppose now that $\gamma$ approaches $v$ through an edge $[v,w]$ of $t$. There are $k$ triangles at $[v,w]$, $t_1,\dots,t_k$ and their pairwise distances differ by at most one. Now for an arbitrary triangle $t'\in X^{(2)}$, the geodesic $\gamma_{t'}$ (constructed as before) need not necessarily go through the triangle $t$, but needs to either go through the interior of one of the triangles $t_1,\dots,t_k$, denoted by $t_i$, in which case $d(t')> d(t_i)$ or through the edge $[v,w]$, in which case the distance is defined using the combinatorially shortest path. However, this shortest path also needs to go through one of $t_1,\dots,t_k$, as they are the only ones at $v$ connected to $w$ which $\gamma$ also passes through. Either way, we get $d(t')> d(t_i)$, for some $i$. Hence, at least one of the triangles $t_1,\dots,t_k$ needs to be minimal, and it there are more minimal triangles they are all among $t_1,\dots,t_k$. This finishes the proof.
    \end{proof}

    From the lemma above, we see that the distance function is in some sense determined locally. We formalise this by introducing the notion of local distance functions:

    \begin{defn}
       The distance function $d_v:L^*(v)^{(0)}\to\mathbb{Z}_{\geq0}$ is defined as the shortest distance between a vertex $t\in L^*(v)^{(0)}$ and a minimal vertex $t_{v}\in L^*(v)^{(0)}$.
    \end{defn}
    
    \begin{lemma}
        For any triangle $t\in X^{(2)}$ and an adjacent vertex $v\in X^{(0)}$: 
        $$d_e(t)=d_e(t_{v})+d_v(t),$$
        for some minimal triangle $t_{v}$ at $v$.
        \label{cor2}
    \end{lemma}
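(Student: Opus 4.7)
The plan is to sandwich $d_e(t)$ between matching upper and lower bounds, both equal to $m_v + d_v(t)$, where $m_v$ denotes the common distance from $e$ to any minimal triangle at $v$. The minimal triangle $t_v$ realising the identity will emerge from the $\cat(0)$ geodesic from the base face to $v$.

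For the upper bound, let $t_v^0$ be a minimal triangle at $v$ realising $d_v(t) = d_{L^*(v)}(t_v^0, t)$; such a triangle exists by the definition of $d_v$. Concatenating a combinatorial geodesic in $\Cay(G,S)$ from $e$ to $t_v^0$ with a shortest path inside $L^*(v) \subset \Cay(G,S)$ from $t_v^0$ to $t$ produces a path of length $m_v + d_v(t)$, giving $d_e(t) \leq m_v + d_v(t)$.

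For the lower bound, I would choose a point $p_\epsilon$ in the interior of $t$ at distance $\epsilon$ from $v$ and consider the $\cat(0)$ geodesic $\gamma_\epsilon$ from a base point $p_e$ in the interior of $e$ to $p_\epsilon$. As $\epsilon \to 0$ these geodesics converge uniformly to the $\cat(0)$ geodesic $\gamma_0$ from $p_e$ to $v$, which by Lemma \ref{cor1} approaches $v$ through a minimal triangle; call it $t_v$. For $\epsilon$ small enough, the portion of $\gamma_\epsilon$ outside a small ball $B(v,\delta)$ agrees with $\gamma_0$ and contributes exactly $m_v$ edge crossings, entering $B(v,\delta)$ inside $t_v$. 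Inside $B(v,\delta)$ the complex $X$ is a cone on $\link{v}$, whose edges have length $\pi/3$ and girth at least $2\pi$ by non-positive curvature; the $\cat(0)$ geodesic from a point in $t_v$ close to $v$ to $p_\epsilon$ corresponds to the shortest path in $\link{v}$ between the two tangent directions, and each intermediate vertex of this link path translates to one crossing of an edge of $X$ at $v$, for a total of $d_{L^*(v)}(t_v, t)$ crossings---whether the geodesic stays away from $v$ or bends through it, with Remark \ref{rmk induced geod} handling the latter case. Applying Lemma \ref{catacomb} to $\gamma_\epsilon$ then gives $d_e(t) = m_v + d_{L^*(v)}(t_v, t) \geq m_v + d_v(t)$.

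Matching bounds forces $d_{L^*(v)}(t_v, t) = d_v(t)$ and $d_e(t) = d_e(t_v) + d_v(t)$, so this $t_v$ is the required minimal triangle. The main obstacle is the local analysis inside $B(v,\delta)$: one must verify carefully that the $\cat(0)$ geodesic through the cone corresponds to the shortest path in $\link{v}$, and that each intermediate link-vertex it traverses accounts bijectively for an edge of $X$ that the geodesic crosses; once this is in place, the rest is bookkeeping via Lemma \ref{catacomb}.
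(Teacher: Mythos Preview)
The paper states Lemma~\ref{cor2} without proof, leaving it as a direct consequence of Lemmas~\ref{catacomb} and~\ref{cor1}. Your argument is correct and closely parallels the paper's proof of Lemma~\ref{cor1}: perturb the $\cat(0)$ geodesic ending at $v$ into one ending in the interior of $t$, and count edge crossings via Lemma~\ref{catacomb}.

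One simplification worth recording: for the lower bound you only need $d_e(t) \geq m_v + d_v(t)$, not the exact identity $d_e(t) = m_v + d_{L^*(v)}(t_v,t)$. Since balls in a $\cat(0)$ space are convex, once $\gamma_\epsilon$ enters $B(v,\delta)$ it remains there; hence the tail of the induced combinatorial geodesic $g_{\gamma_\epsilon}$ lies entirely in $L^*(v)$, running from some minimal triangle $t_v$ to $t$, and therefore has length at least $d_{L^*(v)}(t_v,t) \geq d_v(t)$. The initial segment of $g_{\gamma_\epsilon}$ up to $t_v$ has length exactly $d_e(t_v)=m_v$, because $g_{\gamma_\epsilon}$ is itself a geodesic. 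This bypasses the precise cone-geodesic/link-path correspondence and the case distinction on whether $\gamma_\epsilon$ passes through $v$, so the ``main obstacle'' you flag essentially dissolves.
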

     This leads to the first theorem of the paper:

\begin{thm}
    \label{cones}
        For every $\tau\in X^{(2)}$, $\mathcal{L}^*(\tau)$ determines the cone type of $\tau$.
    \end{thm}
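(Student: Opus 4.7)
I plan to reconstruct $\cone(\tau)$ layer by layer from $\mathcal{L}^*(\tau)$, proving by induction on $n \geq 0$ the following strengthened statement: $\mathcal{L}^*(\tau)$ determines both the set $C_n := \{\tau' \in \cone(\tau) : d_e(\tau') - d_e(\tau) \leq n\}$ and the directed link $\mathcal{L}^*(\tau')$ for every $\tau' \in C_n$. Letting $n$ be arbitrary then yields the full cone type.

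The base case $n = 0$ is trivial, since $C_0 = \{\tau\}$ and $\mathcal{L}^*(\tau)$ is given. For the inductive step, the easier half is to identify $C_{n+1}$: a neighbor $\tau'$ of some $\tau'' \in C_n$ at distance exactly $n$ from $\tau$ lies in $\cone(\tau)$ if and only if $d_e(\tau') = d_e(\tau'') + 1$, which is precisely the existence of a directed edge $\tau'' \to \tau'$ in $\mathcal{L}^*(v) \subset \mathcal{L}^*(\tau'')$, where $v$ is the shared vertex. This data is available by the inductive hypothesis.

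The harder half is computing $\mathcal{L}^*(\tau')$ for each newly added $\tau' \in C_{n+1} \setminus C_n$. At its shared vertex $v$ with $\tau''$, the directed link $\mathcal{L}^*(v)$ is known. For the other vertices $w$ of $\tau'$, either $w$ has already been processed (so $\mathcal{L}^*(w)$ is known) or $w$ is genuinely new. By Lemma \ref{cor2}, $\mathcal{L}^*(w)$ is determined by the set of minimal triangles at $w$, since the absolute values of $d_e$ at triangles at $w$ are just a global shift of the fixed local distance function $d_w$ on the known abstract link $L^*(w)$. By Lemma \ref{cor1}, these minimal triangles are exactly the triangles from which the $\cat(0)$ geodesic from a point $p_e \in e$ to $w$ can approach.

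The main obstacle is to identify the set of minimal triangles at a genuinely new vertex $w$ from $\mathcal{L}^*(\tau)$ alone, without invoking further global information about $X$. To overcome this, I would mimic the angle tracking in the proof of Lemma \ref{induced geodesic}: for $p_e$ in the interior of $\tau$ and $p$ in the interior of $\tau'$ near $w$, the $\cat(0)$ geodesic $p_e \to p$ crosses edges dual to a combinatorial geodesic $\tau = \tau_0, \tau_1, \ldots, \tau_n, \tau'$ all lying in $C_{n+1}$, whose existence is guaranteed by $\tau' \in \cone(\tau)$ together with Lemma \ref{catacomb}. Letting $p$ tend to $w$ within each sector of $\tau'$ abutting $w$ gives $\cat(0)$ geodesics whose direction of approach at $w$ is forced by this combinatorial path and by the curvature accumulated at intermediate vertices, all of which is recorded in $\mathcal{L}^*(\tau)$ and the fixed local combinatorial structure of $X$. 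Since minimal triangles at $w$ are exactly those realized as such approach directions, they are determined by $\mathcal{L}^*(\tau)$, closing the induction and proving the theorem.
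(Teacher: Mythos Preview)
Your inductive framework matches the paper's, and the divergence is only at the crucial step: determining $\mathcal{L}^*(w)$ at a genuinely new vertex $w$ of a freshly added triangle $t'$. You propose to detect the minimal triangles at $w$ as the approach sectors of the $\cat(0)$ geodesic from a point you place in the interior of $\tau$. But minimality at $w$ is with respect to $d_e$, the distance from the \emph{base face} $e$, and Lemma~\ref{cor1} only identifies the $d_e$-minimal triangles with the approach direction of the geodesic \emph{from $e$}. The geodesic you track, starting in $\tau$, picks out $d_\tau$-minimal triangles instead, and you give no argument that the two notions of minimality coincide. If instead you intended the geodesic from $e$, that geodesic need not pass through $\tau$, so its induced combinatorial path is not recoverable from $\mathcal{L}^*(\tau)$. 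Either reading leaves the induction open. A secondary issue is that ``tracking the direction'' along a combinatorial path does not actually pin down a continuous approach direction: the proof of Lemma~\ref{induced geodesic} uses such directions to bound curvature but never claims they are determined by the combinatorics alone.

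The paper avoids all of this by never computing an approach direction. It proves directly, via an angle-sum contradiction in a $\cat(0)$ triangle on $p_e$ and two vertices of an adjacent face, that any $d_e$-minimal triangle at $w$ must already contain one of the old vertices $y$ or $z$ of $t'$, and hence is already visible in $\mathcal{L}^*(y)\cup\mathcal{L}^*(z)\subset\mathcal{L}^*(t)$. From there $\mathcal{L}^*(w)$ is rebuilt via Lemma~\ref{cor2}. You should replace your final paragraph with this localization argument.
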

    By Cannon's theorem (Theorem \ref{Cannon}) we instantly get the following corollary:    \begin{cor}
        The language of all geodesics in $G$ is regular.
    \end{cor}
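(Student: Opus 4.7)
My plan is to show that the cone type of $\tau$ is reconstructible, step by step, from the directed link data $\mathcal{L}^*(\tau)$. First, translate: writing $h \in G$ as a word $w = s_1 s_2 \cdots s_n$ and letting $\tau_0 = \tau, \tau_1, \dots, \tau_n$ be the sequence of triangles traversed by $w$ in the Cayley graph starting at $g$, we have $h \in \cone{\tau}$ precisely when $d_e(\tau_{i+1}) = d_e(\tau_i) + 1$ for every $i$. By Lemma \ref{cor2}, this increment is detected locally at each step: it is equivalent to the presence of a directed edge in $\mathcal{L}^*(v)$ from $\tau_i$ to $\tau_{i+1}$, where $v$ is a vertex of $X$ on the edge shared by the two triangles.

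The bulk of the argument is an induction on $n$, showing that $\mathcal{L}^*(\tau_i)$ is determined by $\mathcal{L}^*(\tau_{i-1})$ together with the fixed combinatorial data of the coset graphs $\Gamma_j$ given by Stallings' theorem. Concretely, $\tau_i$ and $\tau_{i-1}$ share an edge of $X$ (hence two vertices), at which $\mathcal{L}^*$ is already recorded; only one vertex $v'$ of $\tau_i$ is new. Lemma \ref{cor1} is decisive here: the minimal triangles at $v'$ are precisely those through which a $\cat(0)$ geodesic from the base approaches $v'$, so since the outgoing step $\tau_{i-1} \to \tau_i$ arose from a geodesic direction already captured in $\mathcal{L}^*(\tau_{i-1})$, this approach is controlled by the recorded data. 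Once the minimal triangles at $v'$ are pinned down, Lemma \ref{cor2} computes the local distance $d_{v'}$ from the fixed link graph at $v'$, thereby giving all directed edges of $\mathcal{L}^*(v')$.

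The main obstacle I anticipate is justifying, at the last step, that no minimal triangle at $v'$ lies outside the region already encoded by $\mathcal{L}^*(\tau_{i-1})$ — i.e.\ that one cannot reach $v'$ by a shorter route altogether bypassing $\tau_{i-1}$. This is where non-positive curvature and Lemma \ref{catacomb} are essential: any such shortcut would produce a $\cat(0)$ geodesic whose induced combinatorial path is shorter than the one through $\tau_{i-1}$, contradicting the correspondence established there, together with the adjacency clause of Lemma \ref{cor1} that keeps the minimal triangles in a single block. Once the induction is carried through, iterating produces a canonical, fully local description of $\cone{\tau}$ depending only on $\mathcal{L}^*(\tau)$, and the stated corollary on the regularity of the geodesic language follows from Cannon's theorem (Theorem \ref{Cannon}) once one observes that only finitely many $\mathcal{L}^*(\tau)$ can occur as orientations of the finite local link structure.
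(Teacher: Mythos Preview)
Your overall strategy matches the paper's exactly: the corollary is an immediate consequence of Theorem~\ref{cones} (that $\mathcal{L}^*(\tau)$ determines $\cone{\tau}$) together with Cannon's theorem, and your inductive propagation of $\mathcal{L}^*$ along a geodesic is precisely the mechanism the paper uses to prove Theorem~\ref{cones}. You have also correctly isolated the one genuine difficulty, namely showing that every minimal triangle at the new vertex $v'$ lies in the region already recorded by $\mathcal{L}^*(\tau_{i-1})$.

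However, the justification you offer for that step does not go through as written. You argue that a minimal triangle outside the known region would produce ``a $\cat(0)$ geodesic whose induced combinatorial path is shorter than the one through $\tau_{i-1}$,'' but there is nothing to contradict here: we never claimed $\tau_{i-1}$ or $\tau_i$ is minimal at $v'$, so a shorter approach to $v'$ from a different direction is not in itself inconsistent with Lemma~\ref{catacomb}. The adjacency clause of Lemma~\ref{cor1} likewise only constrains the block of minimal triangles, not where that block sits relative to $\tau_i$. What the paper actually does is a $\cat(0)$ comparison argument: assuming a minimal triangle $t_w$ at the new vertex $w$ avoids both shared vertices $y,z$, one locates an intermediate triangle $t''$ adjacent to $t'$, names its third vertex $v$, and then bounds the angles that the $\cat(0)$ geodesics $\gamma_v$ and $\gamma_z$ make with the edge $[v,z]$ using the link structure (via Lemmas~\ref{cor1} and~\ref{cor2}). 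These bounds force the geodesic triangle $\{p_e,v,z\}$ to have angle sum strictly greater than $\pi$, which is impossible in a $\cat(0)$ space. Your sketch is missing this angle-sum step, and without it the induction does not close.
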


    \begin{proof}[Proof of Theorem \ref{cones}]
	
	

	From the definition, we can see that the cone of $\tau$ can inductively be constructed by adding the neighbouring faces of greater distance. For a triangle $t\in\cone{\tau}$, $\mathcal{L}^*(t)$ encodes all the local triangles of greater distance. We will show that whenever we add a neighbouring triangle $t'$ of greater distance to $t$, we can determine $\mathcal{L}^*(t')$ from $\mathcal{L}^*(t)$, allowing us to propagate this process and retrieve all the triangles of the cone.
	
	Namely, assume $t=\{x,y,z\}$ is of distance $n$. If $t'=\{y,z,w\}$ is a neighbouring triangle of distance $n+1$ we want to say that we also know $\mathcal{L}^*(t')$, i.e. how $\mathcal{L}^*(w)$ fits together with $\mathcal{L}^*({\{y,z\}})$. We claim that $\mathcal{L}^*(w)$ has to have its minimal vertices in $\mathcal{L}^*(\{y,z\})$. By Lemma \ref{cor2}, we can then get the rest of $\mathcal{L}^*(w)$ by constructing paths to each vertex from the closest minimal one in $L^*(w)$.

    Assume the contrary, i.e. that a minimal triangle $t_w$ at $w$ does not have either $y$ or $z$ as its vertices. Then by Lemma \ref{cor2}, there is a sequence of neighbouring triangles at $w$ from some minimal triangle at $w$ to $t'$ increasing in distance. Furthermore, Lemma \ref{cor1} tells us that all minimal triangles at a vertex are adjacent, so we can see this sequence as starting from $t_w$, where the first step is not necessarily distance increasing, but all the others are. Without loss of generality, we can assume that the second-to-last member of that sequence is a triangle $t''$ sharing vertices $w$ and $z$ with $t'$ (see Figure \ref{min triangle}). The distance function at $t''$ is $n$.
        
    \begin{figure}[!ht]
        \centering
        \resizebox{0.4\textwidth}{!}{%
        \begin{circuitikz}
        \tikzstyle{every node}=[font=\normalsize]
        \draw [short] (7,11) -- (8,9);
        \draw [short] (8,9) -- (6,9);
        \draw [short] (6,9) -- (7,11);
        \draw [short] (7,11) -- (5,11);
        \draw [short] (5,11) -- (6,9);
        \draw [short] (5,11) -- (3,10);
        \draw [short] (5,11) -- (3,12);
        \draw [short] (3,12) -- (3,10);
        \draw [dashed] (5,11) -- (4,9);
        \draw [dashed] (4,9) -- (6,9);
        \node [font=\normalsize] at (8.25,8.75) {x};
        \node [font=\normalsize] at (7,11.25) {y};
        \node [font=\normalsize] at (6,8.75) {z};
        \node [font=\normalsize] at (5,11.25) {w};
        \node [font=\normalsize] at (7,9.75) {t};
        \node [font=\normalsize] at (6,10.25) {t'};
        \node [font=\normalsize] at (3.5,11) {$t_w$};
        \node [font=\normalsize] at (5,9.75) {t''};
        \end{circuitikz}
        }%
        \caption{Triangles $t$, $t'$, $t''$ and $t_w$}
        \label{min triangle}
    \end{figure}
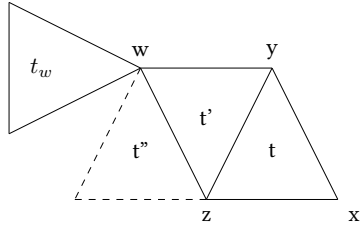

    Denote the third vertex of $t''$ by $v$. Now look at the $\cat(0)$ geodesics $\gamma_v$ and $\gamma_z$ from $p_e$ to the vertices $v$ and $z$ respectively. Lemma \ref{cor1} tells us these geodesics must approach their respective end vertices through minimal triangles. We further know that smaller distance triangles surround the triangle $t'$ from both sides. Take the shortest path $p$ in $L^*(z)$ from a minimal vertex of $L^*(z)$ to $t$ and the shortest path $p''$ in $L^*(z)$ from a minimal vertex of $L^*(z)$ to $t''$. Because $t$ and $t''$ are of the same distance, paths $p$ and $p''$ are of the same length. Since minimal points of $L^*(z)$ are at most distance one apart, we can use these paths to complete a cycle $c$ in $L^*(z)$ containing the points $t'',t',t$. As $c$ is a cycle in $L^*(z)$, its girth is at least 6, so the length of each of the paths is at least 2. Hence, there is at least one triangle between $t''$ and any minimal triangle at $z$. This implies that the angle between the edge $\{v,z\}$ and $\gamma_z$ is larger than $\frac{\pi}{3}$. 
    Similarly, triangle $t''$ has a smaller distance triangle on one side and a smaller or equal distance triangle on the other side. If both are smaller, analogous to the argument above, we have a path of distance at least 3 from the minimal triangle at $v$ to $t''$, so the angle between the edge $\gamma_v$ and $\{v,z\}$ is larger than $\frac{2\pi}{3}$.
    If there is a triangle of equal distance on one side of $t''$ at $v$, then that triangle and $t''$ have to be the minimal triangles at $w$, so the other triangles containing $v,w$ cannot be of smaller distance. Hence, we can use the same approach of looking at the paths to the two triangles and the cycle $c'$ induced by them, which will not be trivial because the paths cannot enter the triangles from the same edge. In this case, the path from a minimal triangle at $v$ to $t''$ is at least 2, but it can only be two if $L^*(v)$ contains two minimal vertices. This would further mean that $\gamma_v$ approaches $v$ at the intersection of the two minimal triangles, so the angle between the edge $\gamma_v$ and $\{v,z\}$ would be equal to $\frac{2\pi}{3}$. 
    To sum up, the angle between the edge $\gamma_v$ and $\{v,z\}$ is larger or equal to $\frac{2\pi}{3}$ and the angle between the edge $\{v,z\}$ and $\gamma_z$ is larger than $\frac{\pi}{3}$, so the sum of the angles of the triangle $\{z,v,p_e\}$ (see Figure \ref{contradictory triangle}) is greater than $\pi$, which is a contradiction.

    \begin{figure}[!ht]
        \centering
        \resizebox{0.5\textwidth}{!}{%
        \begin{circuitikz}
        \tikzstyle{every node}=[font=\normalsize]
        \draw [short] (7,11) -- (8,9);
        \draw [short] (8,9) -- (6,9);
        \draw [short] (6,9) -- (7,11);
        \draw [short] (7,11) -- (5,11);
        \draw [short] (5,11) -- (6,9);
        \node [font=\normalsize] at (8.25,8.75) {x};
        \node [font=\normalsize] at (7,11.25) {y};
        \node [font=\normalsize] at (6,8.75) {z};
        \node [font=\normalsize] at (5,11.25) {w};
        \node [font=\normalsize] at (7,9.75) {t};
        \node [font=\normalsize] at (6,10.25) {t'};
        \node [font=\normalsize] at (5,9.75) {t''};
        \draw [short] (6,9) -- (4,9);
        \draw [short] (4,9) -- (5,11);
        \node [font=\normalsize] at (4,8.75) {v};
        \draw [short] (4,9) -- (0.5,6);
        \draw [short] (0.5,6) -- (6,9);
        \node [font=\normalsize] at (0.5,5.75) {$p_e$};
        \end{circuitikz}
        }%
        \caption{The triangle $\{p_e,v,z\}$}
        \label{contradictory triangle}
    \end{figure}
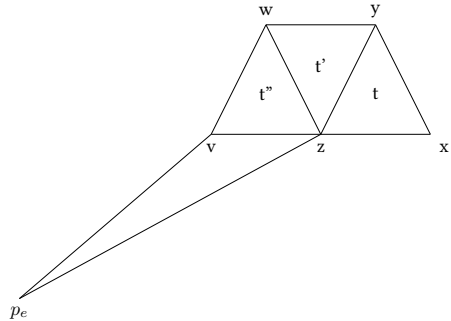
        
    To sum up, we proved that $\mathcal{L}^*(t)$ determines $\mathcal{L}^*(t')$, which inductively gives us the cone type of $\tau$.
\end{proof}

\section{The language of lexicographically first geodesics}

In this section, we show that the language of lexicographically first geodesics in $k$-fold triangle groups satisfies the fellow traveller property and hence, provides an automatic structure for $G$.

\begin{defn}
    Let $\mathcal{D}$ be a disk diagram over $C$, and $g$ a combinatorial geodesic such that $g\subset\partial\mathcal{D}$. We say that an $n$-gonal $g$-cell $c\in\mathcal{D}$, $n\geq6$, is \textit{$g$-maximally positively curved} if it contains the maximal possible number of vertices of positive curvature. In the case of $n$-gonal $g$-cells with at most one $g$-adjacent triangle this number is $\frac{n}{2}-1$, and in the case of $n$-gonal $g$-cells with two $g$-adjacent triangles it is $\frac{n}{2}$. We say that $c$ is \textit{almost $g$-maximally positively curved} if it contains one fewer positively curved vertices than possible.
\end{defn}

    In Lemma \ref{catacomb}, the $g$-maximally positively curved polygons with no $g$-adjacent triangles were the ones over which we were able to retract to reduce the area of the disc diagram. In this section, we will not be able to do that. Still, we will prove that if $g$ is a combinatorial geodesic, then there has to be additional negative curvature between such polygons.

\begin{lemma}
    Let $\mathcal{D}$ be a disk diagram over $C$, and $g$ a combinatorial geodesic such that $g\subset\partial\mathcal{D}$ and $g$ contains no cut points of $\mathcal{D}$. Further, let all the non-triangular $g$-cells be $g$-maximally positively curved with the exception of the ones with no $g$-adjacent triangles, which are also allowed to be almost $g$-maximally positively curved. Then for every two $g$-maximally positively curved cells with no $g$-adjacent triangles, there is a cell $c\in\mathcal{D}$ between them, which intersects $g$ but is not a $g$-cell.
    \label{maximally positive}
    
\end{lemma}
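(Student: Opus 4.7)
The plan is to argue by contradiction: suppose every cell of $\mathcal{D}$ meeting the subpath of $g$ strictly between $c_1$ and $c_2$ is itself a $g$-cell. Let $c_1 = e_0, e_1, \ldots, e_m = c_2$ be the chain of $g$-cells along this subpath, meeting consecutively at shared vertices $v_1, \ldots, v_m \in g$.

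First I would analyse each vertex $v_j$ locally. The two edges of $g$ incident to $v_j$ lie on $\partial\mathcal{D}$ and belong respectively to $e_{j-1}$ and $e_j$, so any cell of $\mathcal{D}$ containing $v_j$ other than $e_{j-1}, e_j$ would intersect $g$ only at the vertex $v_j$, not along an edge. The contradiction hypothesis forbids such a cell, so the only cells of $\mathcal{D}$ touching $v_j$ are $e_{j-1}$ and $e_j$. If these do not share a non-$g$ edge at $v_j$, the link of $v_j$ in $\mathcal{D}$ decomposes into two disjoint arcs; this makes $v_j$ a cut point of $\mathcal{D}$ lying on $g$, contradicting the standing hypothesis. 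Hence $e_{j-1}$ and $e_j$ must share a non-$g$ edge at every $v_j$.

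To handle the remaining case I would isolate the sub-diagram $\mathcal{D}_0 \subseteq \mathcal{D}$ bounded by the subpath $g_0 \subseteq g$ from the start of $c_1 \cap g$ to the end of $c_2 \cap g$, together with the outer boundary of the chain $e_0, \ldots, e_m$, and apply the Combinatorial Gauss--Bonnet theorem (Theorem \ref{CGB}) to $\mathcal{D}_0$. By the $g$-maximality of $c_1$ and $c_2$, their positive vertex contributions to $\kappa_{\mathcal{D}_0}^v(g_0)$ already saturate the upper bound derived at the end of the proof of Lemma \ref{catacomb}. Without an additional buffer cell to supply extra negative vertex curvature at the $v_j$, the forced sharing of non-$g$ edges produces additional positive contribution that pushes $\kappa_{\mathcal{D}_0}^v(g_0)$ strictly past the geodesicity bound for $g_0$, contradicting that $g_0$ is a subpath of the combinatorial geodesic $g$.

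The main obstacle will be the curvature bookkeeping in this last step: controlling the contributions of the intermediate cells $e_1, \ldots, e_{m-1}$ (which are $g$-cells but need not themselves be $g$-maximally positively curved) and of any cells in $\mathcal{D}_0$ lying above the chain. I expect to normalise the local structure at each $v_j$ by applying the retriangulation move of Lemma \ref{catacomb} (Figure \ref{retriangulating}), after which the relevant inequality becomes a direct refinement of the closing estimate of Lemma \ref{catacomb}, with the absence of a buffer non-$g$-cell being exactly what produces the strict violation.
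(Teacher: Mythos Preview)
Your opening setup (the chain $e_0,\dots,e_m$ and the local analysis at each $v_j$ forcing consecutive cells to share a non-$g$ edge) is correct and is in fact a more careful version of what the paper leaves implicit in its ``ladder'' picture. So the contradiction hypothesis does force $\mathcal{D}_0$ to be a single-layer strip of $g$-cells glued edge-to-edge, with no interior vertices.

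Where you diverge from the paper is in how you extract the contradiction from this strip. You propose to run Gauss--Bonnet on $\mathcal{D}_0$ and appeal to the curvature estimates from the end of the proof of Lemma~\ref{catacomb}. That reference is misplaced: those estimates compared a combinatorial geodesic against a $\cat(0)$-induced path $g_\gamma$, and there is no $g_\gamma$ here. More importantly, your description of the mechanism (``forced sharing of non-$g$ edges produces additional positive contribution'') is backwards: sharing an edge at $v_j$ gives two corners of angle $\tfrac{2\pi}{3}$ there, hence $\kappa(v_j)=-\tfrac{\pi}{3}$, which is \emph{negative}. So the contradiction is not that some curvature bound is exceeded at the $v_j$; you would have to track the positive contribution at the interior $g$-vertices of each $e_j$ against the face curvatures and the $h_0$-side, and you have not indicated how that balance comes out.

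The paper bypasses all of this with a one-line counting argument. Once you know $\mathcal{D}_0$ is a ladder, compare the number of edges each $e_j$ contributes to $g$ versus to $\partial\mathcal{D}_0\setminus g$. The (almost) $g$-maximal positive curvature hypothesis on the intermediate cells means each contributes at least as many edges to the $g$-side as to the other side; the $g$-maximality of $c_1$ and $c_2$ (with no $g$-adjacent triangles) makes their contribution strictly larger on the $g$-side. Summing, $\partial\mathcal{D}_0\setminus g$ is strictly shorter than the subpath of $g$ it cobounds, contradicting that $g$ is a combinatorial geodesic. This is both simpler and sharper than the Gauss--Bonnet route you sketched, and I would recommend replacing your second step with it.
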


\begin{proof}
    
    Let $c_1$ and $c_2$ be two $g$-maximally positively curved cells with no $g$-adjacent triangles with no other such cells between them.
    We will look at a disc diagram $\mathcal{D}'$  consisting of all cells intersecting $g$ between $c_1$ and $c_2$. Assume that $\mathcal{D}'$ consists only of $g$-cells.

    Because all the non-triangular $g$-cells are (almost) $g$-maximally positively curved, all of them, apart from $c_1$ and $c_2$, have at least as many vertices on $g$ as they do on $\partial \mathcal{D}'-g$. (See Figure \ref{ladder argument}) But $c_1$ and $c_2$ have strictly more vertices on $g$ than on $\partial \mathcal{D}'-g$. This contradicts $g$ being a combinatorial geodesic.
    \begin{figure}[!ht]
        \centering
        \resizebox{0.6\textwidth}{!}{%
        \begin{circuitikz}
        \tikzstyle{every node}=[font=\LARGE]
        \draw [ color={rgb,255:red,180; green,0; blue,0}, short] (6.25,6.5) -- (6.25,5.25);
        \draw [ color={rgb,255:red,0; green,180; blue,180}, short] (6.25,6.5) -- (7.5,7.25);
        \draw [ color={rgb,255:red,0; green,180; blue,180}, short] (7.5,7.25) -- (8.75,6.5);
        \draw [short] (8.75,6.5) -- (8.75,5.25);
        \draw [ color={rgb,255:red,180; green,0; blue,0}, short] (8.75,5.25) -- (7.5,4.5);
        \draw [ color={rgb,255:red,180; green,0; blue,0}, short] (7.5,4.5) -- (6.25,5.25);
        \draw [ color={rgb,255:red,0; green,180; blue,180}, short] (8.75,6.5) -- (10,7.25);
        \draw [ color={rgb,255:red,0; green,180; blue,180}, short] (10,7.25) -- (11.25,6.5);
        \draw [short] (11.25,6.5) -- (11.25,5.25);
        \draw [ color={rgb,255:red,180; green,0; blue,0}, short] (11.25,5.25) -- (10,4.5);
        \draw [ color={rgb,255:red,180; green,0; blue,0}, short] (10,4.5) -- (8.75,5.25);
        \draw [short] (11.25,6.5) -- (12.5,5.75);
        \draw [ color={rgb,255:red,180; green,0; blue,0}, short] (12.5,5.75) -- (11.25,5.25);
        \draw [ color={rgb,255:red,0; green,180; blue,180}, short] (11.25,6.5) -- (11.25,7.75);
        \draw [ color={rgb,255:red,0; green,180; blue,180}, short] (11.25,7.75) -- (12.5,8.5);
        \draw [ color={rgb,255:red,0; green,0; blue,0}, short] (12.5,8.5) -- (13.75,7.75);
        \draw [ color={rgb,255:red,180; green,0; blue,0}, short] (13.75,7.75) -- (13.75,6.5);
        \draw [ color={rgb,255:red,180; green,0; blue,0}, short] (13.75,6.5) -- (12.5,5.75);
        \draw [ color={rgb,255:red,0; green,180; blue,180}, short] (12.5,8.5) -- (12.5,9.75);
        \draw [ color={rgb,255:red,0; green,180; blue,180}, short] (12.5,9.75)-- (13.75,10.5);
        \draw [ color={rgb,255:red,180; green,0; blue,0}, short] (13.75,10.5) -- (15.00,9.75);
        \draw [ color={rgb,255:red,180; green,0; blue,0}, short] (15.00,9.75) -- (15.00,8.5);
        \draw [ color={rgb,255:red,180; green,0; blue,0}, short] (15.00,8.5) -- (13.75,7.75);
        \node [font=\normalsize, color={rgb,255:red,180; green,0; blue,0}] at (11.9,5) {$g$};
        \node [font=\normalsize, color={rgb,255:red,13; green,13; blue,13}] at (7.5,6) {$c_1$};
        \node [font=\normalsize, color={rgb,255:red,13; green,13; blue,13}] at (13.75,9.25) {$c_2$};
        \node [font=\normalsize, color={rgb,255:red,0; green,180; blue,180}] at (10.25,7.75) {$\partial \mathcal{D}'-g$};
        \end{circuitikz}
        }%
        
        \caption{The disk diagram $\mathcal{D}'$ consists only of $g$-maximally positively curved $g$-cells and their $g$-adjacent triangles.}
        \label{ladder argument}
    \end{figure}
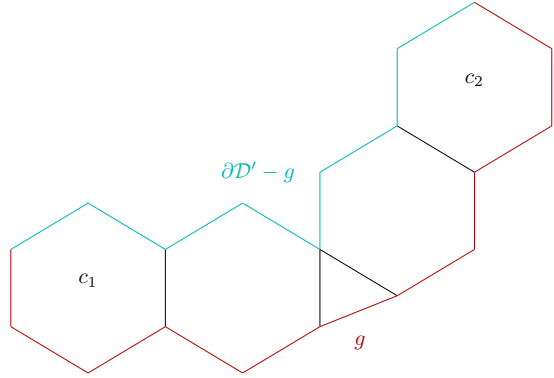
\end{proof}

\begin{rmk}
    When $g$ is a combinatorial geodesic contained in the boundary of a disk diagram $\mathcal{D}$, the curvature of $\mathcal{D}$ that can be see as coming from $g$ is:
\begin{equation}
                 \kappa_{\mathcal{D}}(g):=\kappa^v_{\mathcal{D}}(g)+\kappa^f_{\mathcal{D}}(g) + \sum_{\substack{f\in \mathcal{D}^{(2)}\\ f\cap g \neq \emptyset\\ f \text{ is not a $g$-cell}}} \kappa(f).
                 \label{lemma demo1}
            \end{equation}
As all the faces are non-posiitively curved, we usually only want to consider $g$-cells, i.e. we use the following bound:

\begin{equation}
                 \kappa_{\mathcal{D}}(g)\leq\sum_{v\in \interior(g)} (\pi-\sum_{\substack{f_v \text{ is a }g\text{-cell containing } v\\f_v \text{ not tiangular}}}\frac{2\pi}{3})+\kappa^f_{\mathcal{D}}(g).
                 \label{lemma demo2}
\end{equation}

            The claim of Lemma \ref{maximally positive} is that when all the $g$-cells are $g$-maximally positively curved and there are at least two without $g$-adjacent triangles, then we lost something in passing from the equation \ref{lemma demo1} to the inequality \ref{lemma demo2}. Thus, we can lower the upper bound \ref{lemma demo2} by detecting more negative curvature, which comes from the cell $c$ as defined in Lemma \ref{maximally positive}.

            Namely, if $c$ is a triangle, then by accounting for its curvature, we can make the upper bound \ref{lemma demo2} smaller for $\pi$.

            Otherwise, if $c$ is not a triangle, its face curvature might not change the upper bound \ref{lemma demo2}, but the vertex in which $c$ intersects $g$ will have at least $\frac{2\pi}{3}$ smaller curvature than if $c$ did not exist.

            To conclude, in the case of the lemma, the inequality \ref{lemma demo2} can be improved to:

            \begin{equation}
                \kappa_{\mathcal{D}}(g)\leq\sum_{v\in \interior(g)} (\pi-\sum_{\substack{f_v \text{ is a }g\text{-cell containing } v\\f_v \text{ not tiangular}}}\frac{2\pi}{3})+\kappa^f_{\mathcal{D}}(g)- \frac{2\pi}{3}.
                \label{gmpc bound improved}
            \end{equation}
            
\end{rmk}

\begin{lemma}
    A combinatorial geodesic in $\Cay(G,S)$ from $e$ cannot enter $L^*(x)$, for any $x$, at a (combinatorial) distance greater than one from any minimal vertex of $L^*(x)$.
    \label{comb_geod_enters}
\end{lemma}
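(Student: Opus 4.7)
To start, I would unpack what it means for $g$ to enter $L^*(x)$ at a vertex $t$. By definition, $t$ is a triangle of $X$ containing $x$, while the vertex $t'$ immediately preceding $t$ on $g$ does not lie in $L^*(x)$, so $x\notin t'$. The edge of $\Cay(G,S)$ joining $t'$ to $t$ corresponds to a shared edge of the triangles $t$ and $t'$ in $X$; since $x$ is a vertex of $t$ but not of $t'$, this shared edge is forced to be the side of $t$ opposite $x$. We may therefore write $t=\{x,y,z\}$ and $t'=\{w,y,z\}$ for some vertex $w\neq x$, and the fact that $g$ is a geodesic gives $d_e(t)=d_e(t')+1$.

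The plan is now to appeal to the angular sub-claim already established inside the proof of Theorem~\ref{cones}. There, for any two neighbouring triangles whose $d_e$-distances differ by one and which share an edge $\{y,z\}$, it is shown (by contradiction, using Lemma~\ref{cor1} and the CAT(0) angle-sum bound on a geodesic triangle with apex $p_e$) that every minimal triangle at the vertex of the larger-distance triangle not contained in the smaller-distance one must contain $y$ or $z$. In our situation the larger-distance triangle is $t$ and its ``new'' vertex is precisely $x$, so that sub-claim applies verbatim and forces every minimal triangle $t_x\in L^*(x)$ to contain $y$ or $z$.

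Any such $t_x$ therefore shares one of the edges $\{x,y\}$ or $\{x,z\}$ with $t$, so $t_x$ and $t$ are adjacent (or equal) in $L^*(x)$; equivalently $d_x(t)\leq 1$, which is the content of the lemma. The same reasoning applies at every re-entry of $g$ into $L^*(x)$, since each such re-entry again has a predecessor outside $L^*(x)$. The main obstacle, such as it is, is purely one of bookkeeping: one has to recognise that the angular computation carried out inside the proof of Theorem~\ref{cones} is a local statement about pairs of neighbouring triangles whose distances differ by one, and that it applies here with the roles of the earlier and later triangle swapped relative to how it is phrased in that proof. Once this identification is made, no new CAT(0) calculation is needed.
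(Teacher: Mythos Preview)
Your proposal is correct and follows essentially the same approach as the paper: both reduce the statement to the angular sub-argument inside the proof of Theorem~\ref{cones}. The only difference is packaging---you invoke the full conclusion of that sub-argument (minimal triangles at the new vertex must contain $y$ or $z$) as a black box, whereas the paper re-runs the angle-sum contradiction on the geodesic triangle $\{p_e,x,y\}$ directly; the underlying idea is identical.
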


\begin{proof}
    Let $g$ be a combinatorial geodesic from $e$ to $t$. Assume that $t\in L^*(x)$, for some $x$, is of distance at least 2 from a minimal triangle $t_x$ of $L^*(x)$, and no other vertices of $g$ are in $L^*(x)$. Then the vertex $t'$, preceding $t$ on $g$ is of smaller distance than $t$. So, $t$ is surrounded by smaller distance vertices from two sides. Hence, by the same argument as in the proof of Theorem \ref{cones}, there is a vertex $y$ of $X$ such that $t,t'\in L^*(y)$ and a $\cat(0)$ geodesic $\gamma_y$ from $e$ to $y$ such that the angle between $\gamma_y$ and $[x,y]$ is greater than $\frac{2\pi}{3}$. Furthermore, the fact that $d_x(t_x,t)\geq2$ means that for the geodesic $\gamma_x$ from $e$ to $x$, the angle between $\gamma_x$ and $[x,y]$ is greater or equal to $\frac{\pi}{3}$. But that means that the triangle $\{e,x,y\}$ has a sum of angles greater than $\pi$, which is impossible.
\end{proof}

\begin{thm}
    The lexicographically first geodesics in $\Cay(G,S)$ satisfy the fellow traveller property.
\end{thm}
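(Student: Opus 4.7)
The plan is to argue by contradiction via a disc diagram, combining the combinatorial Gauss--Bonnet theorem with the lexicographic minimality of the two geodesics. Suppose the fellow traveller property fails: then for arbitrarily large $\delta$ there exist two lex-first geodesics $g$ (from $e$ to $v$) and $g'$ (from $e$ to $v' = vs$), together with an index $i$ such that $d(\overline{w(i)}, \overline{w'(i)}) > \delta$. I would form the minimal-area disc diagram $\mathcal{D}$ over $C$ bounded by $g$, the edge $[v,v']$, and the reverse of $g'$. The failure assumption produces a transversal slice of $\mathcal{D}$ of width exceeding $\delta$.

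Next I would apply combinatorial Gauss--Bonnet to $\mathcal{D}$, reasoning as in the proof of Lemma \ref{catacomb}. Interior vertices and all faces are non-positively curved by minimality of area, so the right-hand side $2\pi\chi(\mathcal{D}) = 2\pi$ must be supplied entirely by boundary contributions. Along the combinatorial geodesics $g$ and $g'$ each boundary vertex has curvature $\pi/3 - s\cdot 2\pi/3$ with $s \geq 1$, and the three corners at $e$, $v$, $v'$ contribute at most $\pi$ in total. Hence the positive curvature is concentrated at boundary vertices whose incident $g$- or $g'$-cells are (almost) maximally positively curved, exactly the configurations studied in Lemma \ref{maximally positive}.

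The lex-first hypothesis now enters through a swap argument. If an $n$-gonal $g$-cell $c$ with no $g$-adjacent triangles is $g$-maximally positively curved, then its boundary splits into two geodesic arcs of equal length; since $g$ is lex-first, the arc on $g$ must be lex-smaller than the opposing arc, for otherwise substituting yields a lex-strictly-smaller geodesic to $v$. Together with Lemma \ref{maximally positive} (which forces non-$g$-cell separators between such cells, contributing the extra $2\pi/3$ of negative curvature in the improved bound \ref{gmpc bound improved}) and Lemma \ref{comb_geod_enters} (which bounds how far from a minimal link-vertex $g$ may enter any $L^*(x)$), I expect to obtain a linear bound: the positive boundary curvature contributed by any length-$L$ subsegment of $g$ or $g'$ is at most $C$, where $C$ depends only on the half-girths $r_1, r_2, r_3$ of the coset graphs $\Gamma_i$. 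Applied to a wide transversal slice of $\mathcal{D}$ (together with the analogous bound on the slice's two geodesic sides), this forces the slice width to be bounded, producing the desired $\delta$.

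The hardest step will be the swap argument in the presence of $g$-adjacent torsion triangles: when a positively curved vertex on $g$ is incident to such a triangle, the retriangulation of Figure \ref{retriangulating} may alter the adjacency pattern, and I must verify that the swap across the maximally positively curved $n$-gon still produces a lex-strictly-smaller geodesic to the same endpoint. The finiteness of cone types from Theorem \ref{cones} reduces the check to finitely many local configurations, and the same theorem controls the interaction of $g$- and $g'$-shared cells in the centre of $\mathcal{D}$. Once this bookkeeping is in place the bound $\delta$ becomes explicit in terms of the half-girths $r_i$ and the vertex-group orders.
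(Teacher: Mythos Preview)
Your proposal has two genuine gaps that prevent it from going through.

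\textbf{Gauss--Bonnet does not control width.} You form the full bigon $\mathcal{D}$ bounded by $g$, the edge $[v,v']$, and $g'$, and hope that a transversal slice of large width forces a contradiction via Gauss--Bonnet. But in the flat case (half-girths with $\tfrac{1}{r_1}+\tfrac{1}{r_2}+\tfrac{1}{r_3}=1$) the interior curvature may vanish identically, and the boundary estimates you sketch give only a fixed constant bound (roughly $\tfrac{2\pi}{3}$ per geodesic side, as in the paper's analysis of $g'$), independent of length. A long thin Euclidean strip has bounded total curvature yet arbitrary width, so there is no mechanism in your outline connecting the global identity $\sum\kappa = 2\pi$ to a bound on the slice width. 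The paper avoids this entirely: it does not bound width directly, but instead picks a specific vertex $x\in X^{(0)}$ where $g$ and $g'$ enter $L^*(x)$ at link-distance exactly $2$ (using Lemma \ref{comb_geod_enters}), and caps the diagram there by a short path $l\subset L^*(x)$. The midpoint $v_{\min}$ of $l$ has interior angle $\tfrac{4\pi}{3}$, contributing a forced $\kappa(v_{\min})\leq -\tfrac{\pi}{3}$, and it is this localised negative curvature that yields the contradiction.

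\textbf{The swap argument does not constrain curvature.} Your use of lex-minimality---that for a $g$-maximally positively curved $n^{(0)}$-gon the arc on $g$ is lex-smaller than the opposing arc---is correct but vacuous for the curvature count: it does not forbid such cells, nor does it produce extra negative curvature. Lex-first geodesics can and do carry $g$-maximally positively curved cells. The paper exploits lex-minimality very differently. First, it observes that if $s_1<s_1'$ then no vertex of $g'$ lies in $\cone{u_1}$ (else one could reroute $g'$ through $u_1$, contradicting lex-minimality), so $g$ and $g'$ are vertex-disjoint away from $u$. Second, and more delicately, it shows that if the last $g$-maximally positively curved $n^{(0)}$-gon on $g$ is not followed by extra negative curvature, then a propagation argument through subsequent almost-maximally-curved cells forces the minimal link vertex $v_{\min}$ (and hence $v'$) into $\cone{u_1}$, a contradiction. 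This is what buys the sharper bound $\kappa_{\mathcal{D}}^v(g)+\kappa_{\mathcal{D}}^f(g)-\tfrac{1}{2}\kappa_{\mathcal{D}}^f(g,g')+\kappa(w')\leq \tfrac{2\pi}{3}$ for the lex-first side, without which the count does not close. Your swap does not access this cone-containment structure.
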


\begin{proof}
    Let $g$ be the lexicographically first geodesic from $u\in\Cay(G,S)^{(0)}$ to $w\in\Cay(G,S)^{(0)}$ and $g'$ the lexicographically first geodesic from $u$ to  $v=ws\in\Cay(G,S)$, for some $s\in S$. We will show that such two geodesics are always at most distance $\delta=\max_{x\in X^{(0)}}\diam L^*(x)$, which is finite because $L^*(x)$ can take at most three different forms (one for each vertex group of $G$).

    This uniform bound (which gives what is usually referred to as the \textit{asynchronous} fellow traveller property) in the case of a geodesic language implies the fellow traveller property.

    Denote the vertices on the geodesic $g$ by $u_0=u,u_1,\dots,u_n=w$ and the edges by $s_1,\dots,s_n$. Similarly, denote the vertices of $g'$ by $u'_0=u,\dots,u'_m=v$ and the edges by $s'_1,\dots,s'_m$. (See Figure \ref{lexgeod}.)

    Assume now, without the loss of generality, that $s_1<s_1'$. Observe that this implies that $v\notin \overline{\cone{u_1}}$, as otherwise $g'$ would not be a lexicographically first geodesic. Furthermore, no vertices of $g'$ can belong to $\overline{\cone{u_1}}$, as otherwise we could get a combinatorial geodesic with the same endpoints and of the same length starting with a smaller letter, contradicting that $g'$ is lexicographically first. On the other hand, all the vertices of $g$ need to be in $\overline{\cone{u_1}}$, so $u_i\neq u_j'$ for every $i$ and $j$.
    
    We will use a disk diagram argument to show that $g$ and $g'$ stay close together.
    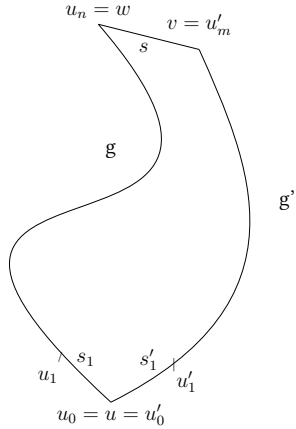
\begin{figure}[!ht]
        \centering
        \resizebox{0.6\textwidth}{!}{%
        \begin{circuitikz}
        \tikzstyle{every node}=[font=\large]
        \draw [short] (6.25,5.25) .. controls (0,11) and (10.75,7.25) .. (6,12.75);
        \node [font=\large] at (6.25,10.25) {g};
        \node [font=\large] at (9.75,9.25) {g'};
        \node [font=\large] at (6.25,5) {$u_0=u=u_0'$};
        \draw [short] (6.25,5.25) .. controls (10.25,7.25) and (9,10) .. (8,12.25);
        \draw [short] (6,12.75) -- (8,12.25);
        \node [font=\large] at (8,12.75) {$v=u_m'$};
        \node [font=\footnotesize] at (5.25,6.15) { / };
        \node [font=\footnotesize] at (7.5,6) { | };
        \node [font=\large] at (6,13) {$u_n=w$};
        \node [font=\large] at (6.9,12.3) {$s$};
        \node [font=\large] at (5.75,6.1) {$s_1$};
        \node [font=\large] at (7,6.1) {$s_1'$};
        \node [font=\large] at (5,5.75) {$u_1$};
        \node [font=\large] at (7.75,5.65) {$u_1'$};
        
        \end{circuitikz}
        }%
        \caption{lexicographically first geodesics $g$ and $g'$}
        \label{lexgeod}
    \end{figure}

     Let $x\in X^{(0)}$ be the farthest point from $u$ such that $g$ and $g'$ enter it at a distance greater than 1. Note that by Lemma \ref{comb_geod_enters}, this distance needs to be exactly 2. If such $x$ doesn't exist, then $g$ and $g'$ enter and consequently exit every $L^*(x)$ at distance one, so they are never at distance larger than $\delta$ from each other and the proof is done.
     
     Now, let $w'$ and $v'$ be the vertices of $g$ and $g'$, respectively, where $g$ and $g'$ enter $L^*(x)$. Consider a disk diagram $\mathcal{D}$ in the cell complex $C$ of minimal area bounded by $g$, $g'$, and the geodesic path $l$ in $L^*(x)$ connecting $w'$ and $v'$.

     As noted above, the disk diagram $\mathcal{D}$, has no cut-vertices, but we don't want to have any cut-triangles either, i.e. triangular cells in $\mathcal{D}$ that have two vertices on $g$ and one vertex on $g'$ or vice-versa. If such a triangle exists, we can split our disk diagram into multiple diagrams that behave in the same way.

     Namely, if the cut triangle $t$ has two vertices $u_k$ and $u_{k+1}$ on $g$ and one vertex $u_j'$ on $g'$, because of the cone condition, the one on $g'$ has to be of distance $k$ from $e$, so it has to be $u_k'$. Furthermore, $\overline{\cone{u_{k+1}}}\subseteq \overline{\cone{u_1}}$. Hence, instead of looking at a disk diagram $\mathcal{D}$, we can look at two disk diagrams $\mathcal{D}_1$ and $\mathcal{D}_2$ that behave in the same way. The first one, $\mathcal{D}_1$, starts at $u$ and is bounded by $g$ and $g'$ up to the farthest point from $u$ \emph{before the triangle $t$} such that $g$ and $g'$ enter it at a distance greater than 1. The second one, $\mathcal{D}_2$, starts at $u_k'$, which is the new base point, and is bounded by a section of $g'$ from $u_k'$ to $u_m'$ and a section of $g$ from $u_{k+1}$ to $u_n$, where our new combinatorial geodesic $g$ will start at $u_k'$ and then follow along $g$. This is still a combinatorial geodesic and the cone condition at $u_1$ from before translates to the cone condition at $u_{k+1}.$

     In the other case, when the cut triangle $t$ has one vertex $u_k$ on $g$ and two vertices $u_j'$ and $u_{j+1}'$ on $g'$, because of the cone condition, the one on $g$ has to be of distance $j+1$ from $e$, so it has to be $u_{j+1}$. Furthermore, $\overline{\cone{u_{j+1}}}\subset\overline{\cone{u_1}}$. We can again split $\mathcal{D}$ into two disk diagrams $\mathcal{D}_1$ and $\mathcal{D}_2$ that behave in the same way. The first one, $\mathcal{D}_1$, starts at $u$ and is bounded by $g$ and $g'$ up to the farthest point from $u$ \emph{before the triangle $t$} such that $g$ and $g'$ enter it at a distance greater than 1. The second one, $\mathcal{D}_2$, starts at $u_j'$, which is the new base point, and is bounded by a section of $g'$ from $u_j'$ to $u_m'$ and a section of $g$ from $u_{j+1}$ to $u_n$, where our new combinatorial geodesic $g$ will start at $u_j'$ and then follow along $g$. This is still a combinatorial geodesic and the cone condition at $u_1$ from before translates to the cone condition at $u_{j+1}.$

     Because the disks $\mathcal{D}_i$ constructed as above have the same properties as $\mathcal{D}$, in the rest of the proof we can assume, we are working with $\mathcal{D}$ without any cut triangles.

    We now estimate the curvatures. By the combinatorial Gauss--Bonnet theorem for $\mathcal{D}$:
    \begin{align}
        2\pi\chi(\mathcal{D})=&\sum_{v\in \mathcal{D}^{(0)}} \kappa(v) + \sum_{f\in \mathcal{D}^{(2)}} \kappa(f)  \nonumber \\ 
        =&\kappa_{\mathcal{D}}^v(g)+\kappa_{\mathcal{D}}^v(l)+\kappa_{\mathcal{D}}^v(g') +\kappa(u)+\kappa(v')+\kappa(w')+\sum_{f\in \mathcal{D}^{(2)}} \kappa(f),
        \label{first estimate}
    \end{align}
    where we can split the vertices of $\mathcal{D}$ in the way above because $g$ and $g'$ don't share any vertices.
    
    The interior angle at $u$ can either be $\frac{2\pi}{3}$, in which case $\kappa(u)\leq\frac{\pi}{3}$, or it can be zero if both geodesics pass through the same first edge in $X$, in which case $\kappa(u)=\pi$. We want to avoid the case of angle zero. This is done in the following way: If we are in the situation of angle zero, instead of looking at the disc diagram $\mathcal{D}$, we look at a modified disc diagram $\mathcal{\tilde{D}}$. The modified disc is bounded by $l,g$ and $\tilde{g}'$, where $\tilde{g}'$, starts from $u_1$ instead of $u$. The new path $\tilde{g}'$ is still a combinatorial geodesic, because having a shorter path to some point $p$ of $g'$ from $u_1$ would imply that that point is in $\overline{\cone{u_1}}$. But now we must have an interior angle of at least $\frac{2\pi}{3}$ at the starting vertex. Hence, without loss of generality, we can assume that $\kappa(u)\leq\frac{\pi}{3}$. Furthermore, the interior angles of $\mathcal{D}$ at $v'$ and $w'$ are $\frac{2\pi}{3}$, so we know that $\kappa(v'),\kappa(w')\leq\frac{\pi}{3}$, but we won't use these estimates yet because of potential overlap with later estimates. We also know that $\chi(\mathcal{D})=1$, so we can rewrite \ref{first estimate} as:
    \begin{equation}
        2\pi\leq \kappa_{\mathcal{D}}^v(g)+\kappa_{\mathcal{D}}^v(l)+\kappa_{\mathcal{D}}^v(g')+\sum_{f\in \mathcal{D}^{(2)}} \kappa(f)+\kappa(w')+ \kappa(v')+\frac{\pi}{3}
        \label{lex_curv_est}.
    \end{equation}

    The faces of $\mathcal{D}$ can be $g$-cells, $g'$-cells, both or neither, so 
    \begin{equation}
        \sum_{f\in \mathcal{D}^{(2)}} \kappa(f)= \kappa_{\mathcal{D}}^f(g)+\kappa_{\mathcal{D}}^f(g')-\kappa_{\mathcal{D}}^f(g,g')+\sum_{\substack{f\in \mathcal{D}^{(2)}\\ f \text{ is not a $g$-cell nor a $g'$-cell} }}\kappa(f).
    \end{equation}

    As all the faces are non-positively curved, the estimate doesn't have to include all of them, so we will only keep the $g$-cells, the $g'$-cells and the triangles coming from the lemma \ref{maximally positive} applied to $g$ and $g'$. The estimate \ref{lex_curv_est} becomes the following:
    \begin{align}
        \pi\leq& \kappa_{\mathcal{D}}^v(g)+\kappa_{\mathcal{D}}^f(g)+ \sum_{\substack{c\in \mathcal{D}^{(2)}\\\text{triangle coming from }\\ \text{Lemma \ref{maximally positive} applied to $g$}}} \kappa(c)-\frac{1}{2}\kappa_{\mathcal{D}}^f(g,g')+\kappa_{\mathcal{D}}^v(g')+\kappa_{\mathcal{D}}^f(g)\nonumber\\ 
        &+ \sum_{\substack{c\in \mathcal{D}^{(2)}\\\text{triangle coming from }\\ \text{Lemma \ref{maximally positive} applied to $g'$}}} \kappa(c)-\frac{1}{2}\kappa_{\mathcal{D}}^f(g,g')+ \kappa_{\mathcal{D}}^v(l)
        +\kappa(w')+\kappa(v')
        \label{lex_curv_est2}.
    \end{align}

    The reason why the cells $c$ coming from Lemma \ref{maximally positive} can be split into the ones coming from $g$ and the ones coming from $g'$ cleanly, i.e. without worrying about double counting is the following: Assume that there is a triangle $c$ as in Lemma \ref{maximally positive} which is the source of the extra negative curvature for both $g$ and $g'$. Then such a triangle has exactly one vertex $u_k$ on $g$ and exactly one vertex $u_j'$ on $g'$. As we are in the context of Lemma \ref{maximally positive}, we can assume that everything around those vertices is as positively curved as possible including them, so they are both contained in exactly two non-triangular cells of the disk diagram $\mathcal{D}$. Furthermore the cells at $u_k$ and $u_j'$ containing the edge $\{u_k,u_j'\}$ of the triangle $c$ have to be the same cell. So, at total there are three cells around the triangle $c$, two of which have to belong to the same $L(x)$, where $x$ is one of the two endpoints of an edge in $X$ around which $c$ lives. Without loss of generality assume that those two cells are $g$-cells. This further implies that both edges of $g$ containing $u_k$ have to pass through the same edge in $X$, but then $g$ is not a combinatorial geodesic, which is a contradiction.

    For estimating the curvature of the geodesic segments $g$ and $g'$, we use the same vertex curvature analysis as in Lemma \ref{catacomb}. We start with $g'$. As before, denote by ${m'}_i^{(j)}$-gons the number of ${n'}_i^{(j)}$-gons, ${n'}_i^{(j)}\geq 6$, that are $g'$-cells with $j\in\{0,1,2\}$ $g'$-adjacent triangles. Further, denote by $s'$ the number of $g'$-adjacent triangles. Unlike in Lemma \ref{catacomb} we are not looking to retract the disk diagram, so the bounds on the maximal positive curvature of $g'$-cells will be different.
    
    As $g'$ is a combinatorial geodesic, the maximal number of positive vertices on an ${n'}_i^{(0)}$-gon is $\frac{{n'}_i^{(0)}}{2}-1$. For ${n'}_i^{(1)}$-gons it is $\frac{{n'}_i^{(1)}}{2}-1$, and for ${n'}_i^{(2)}$-gons it is $\frac{{n'}_i^{(2)}}{2}$.
    
    Also, as before, for every ${n'}_i^{(0)}$-gon, we get two negatively curved vertices on $g'$ and for every ${n'}_i^{(1)}$-gon one, so, by excluding the end vertices, we get that there are is at least $\sum_i\frac{2{m'}_i^{(0)}+{m'}_i^{(1)}-2}{2}\frac{\pi}{3}$ negative curvature on $g'$. 
    
    Finally, any ${n'}^{(0)}$-gon is either not $g'$-maximally positively curved, which means that the total curvature over $g'$ is $\pi/3$ smaller than it could be, or it is $g'$-maximally positively curved and, by Lemma \ref{maximally positive}, produces $-\frac{\pi}{3}$ extra curvature with another $g'$-maximally positively curved ${n'}^{(0)}$-gon, if such exists. This extra negative curvature either comes from a non-$g$-cell $c$ between the ${n'}^{(0)}$-gons or from one of the $g$-cells between them not being maximally curved in this context. It is encoded either through vertex curvature, which means it is in the sum $\kappa_{\mathcal{D}}^v(g')$, or it is encoded through the face curvature of a triangular cell that is not a $g$-cell. There are no cut points on $g'$ because of how $\mathcal{D}$ was defined. In total, there is at least $({m'}_i^{(0)}-1)\frac{\pi}{3}$ of extra negative curvature for every $i$.

    As for the curvature of $g'$-adjacent faces that was not accounted for above (as a cell $c$ from Lemma \ref{maximally positive}), we have $g'$-cells contributing $-\sum_i\sum_{0\leq j\leq2} {m'}_i^{(j)}\left({n'}_i^{(j)}-6\right)\frac{\pi}{3}$ to the curvature and their $g'$-adjacent triangles contributing $-s'\pi$.

    Lastly, $\kappa_{\mathcal{D}}^f(g,g')$ can be at most $ \sum_i\sum_{0\leq j\leq2} {m'}_i^{(j)}\left({n'}_i^{(j)}-6\right)\frac{\pi}{3}$ if all of the non-triangular $g'$-cells are also $g$-cells.
    
    Adding up all those estimates, we get the following upper bound:
     \begin{align}
    \kappa_{\mathcal{D}}^v(g')+&\kappa_{\mathcal{D}}^f(g')+ \sum_{\substack{c\in \mathcal{D}^{(2)}\\\text{triangle coming from }\\ \text{Lemma \ref{maximally positive} applied to $g'$}}} \kappa(c) -\frac{1}{2}\kappa_{\mathcal{D}}^f(g,g')\leq \nonumber\\ 
    &\sum_i \left( {m'}_i^{(0)}\left(\frac{{n'}_i^{(0)}}{2}-1\right)\frac{\pi}{3} + {m'}_i^{(1)}\left(\frac{{n'}_i^{(1)}}{2}-1\right)\frac{\pi}{3} + {m'}_i^{(2)} \left(\frac{{n'}_i^{(2)}}{2}\right) \frac{\pi}{3}\right) \nonumber \\
                & -\sum_i\frac{2{m'}_i^{(0)}+{m'}_i^{(1)}-2}{2}\frac{\pi}{3}-(\sum_i{m'}_i^{(0)}-1)\frac{\pi}{3} \nonumber  \\&- \sum_i\sum_{0\leq j\leq2} {m'}_i^{(j)}\left({n'}_i^{(j)}-6\right)\frac{\pi}{3} -s'\pi +\frac{1}{2}\sum_i\sum_{0\leq j\leq2} {m'}_i^{(j)}\left({n'}_i^{(j)}-6\right)\frac{\pi}{3}.
    \end{align}

    This further transforms into:
    \begin{align}
    \kappa_{\mathcal{D}}^v(g')+&\kappa_{\mathcal{D}}^f(g')+ \sum_{\substack{c\in \mathcal{D}^{(2)}\\\text{triangle coming from }\\ \text{Lemma \ref{maximally positive} applied to $g'$}}} \kappa(c) -\frac{1}{2}\kappa_{\mathcal{D}}^f(g,g') \leq \nonumber\\
                 &\sum_i\left( {m'}_i^{(0)}\left(\frac{{n'}_i^{(0)}}{2}-3\right)\frac{\pi}{3} + {m'}_i^{(1)}\left(\frac{{n'}_i^{(1)}}{2}-\frac{3}{2}\right)\frac{\pi}{3} + {m'}_i^{(2)}\left(\frac{{n'}_i^{(2)}}{2}\right) \frac{\pi}{3}\right) \nonumber  \\
                &+\frac{2\pi}{3}- \frac{1}{2}\sum_i\sum_{0\leq j\leq2} {m'}_i^{(j)}\left({n'}_i^{(j)}-6\right)\frac{\pi}{3} -s'\pi.
    \end{align}

    Moreover, using that $s'\geq\sum_i\frac{{m'}_i^{(1)}+2{m'}_i^{(2)}}{2}$, we get:
    \begin{align}
    \kappa_{\mathcal{D}}^v(g')+&\kappa_{\mathcal{D}}^f(g')+ \sum_{\substack{c\in \mathcal{D}^{(2)}\\\text{triangle coming from }\\ \text{Lemma \ref{maximally positive} applied to $g'$}}} \kappa(c) -\frac{1}{2}\kappa_{\mathcal{D}}^f(g,g') \leq \nonumber\\
                & \sum_i\left( {m'}_i^{(0)}\left(\frac{{n'}_i^{(0)}}{2}-3\right)\frac{\pi}{3} + {m'}_i^{(1)}\left(\frac{{n'}_i^{(1)}}{2}-3\right)\frac{\pi}{3} + {m'}_i^{(2)}\left(\frac{{n'}_i^{(2)}}{2}-3\right)\frac{\pi}{3}\right) \nonumber \\
                &+\frac{2\pi}{3}- \sum_i\sum_{0\leq j\leq2} {m'}_i^{(j)}\left(\frac{{n'}_i^{(j)}}{2}-3\right)\frac{\pi}{3}.
    \end{align}

    This finally means that: 
    \begin{align}
    \kappa_{\mathcal{D}}^v(g')+\kappa_{\mathcal{D}}^f(g')+ \sum_{\substack{c\in \mathcal{D}^{(2)}\\\text{triangle coming from }\\ \text{Lemma \ref{maximally positive} applied to $g'$}}} \kappa(c) -\frac{1}{2}\kappa_{\mathcal{D}}^f(g,g')
                 \leq \frac{2\pi}{3}.
    \label{curv est g'}
    \end{align}

    We could get the same estimate for $g$, but in this case we can do better. Again, denote by ${m}_i^{(j)}$-gons the number of ${n}_i^{(j)}$-gons, ${n}_i^{(j)}\geq 6$, that are $g$-cells with $j\in\{0,1,2\}$ $g$-adjacent triangles. Further, denote by $s$ the number of $g$-adjacent triangles.
    
    Let us first assume that $g$ has no $g$-maximally curved $n^{(0)}$-gons. The only thing that changes compared to the estimate for $g'$ is that at the first step instead of having $\sum_i {m'}_i^{(0)}\left(\frac{{n'}_i^{(0)}}{2}-1\right)\frac{\pi}{3}-(\sum_i{m'}_i^{(0)}-1)\frac{\pi}{3}$ we have $\sum_i {m'}_i^{(0)}\left(\frac{{n'}_i^{(0)}}{2}-2\right)\frac{\pi}{3}$.

    This means that entire estimate only changes for $\frac{\pi}{3}$, so the inequality bounding the curvature of $\mathcal{D}$ that comes from $g$ is: 
    \begin{align}
    \kappa_{\mathcal{D}}^v(g)+\kappa_{\mathcal{D}}^f(g) -\frac{1}{2}\kappa_{\mathcal{D}}^f(g,g')+ \sum_{\substack{c\in \mathcal{D}^{(2)}\\\text{triangle coming from }\\ \text{Lemma \ref{maximally positive} applied to $g$}}} \kappa(c)
                 \leq \frac{\pi}{3}.
    \label{curv est g}
    \end{align}

    Now assume that there are $g$-maximally positively curved $n^{(0)}$-gons. Using that $g$ is the lexicographically first geodesic to $w$, we will show that the bound above still needs to hold.

    From Lemma \ref{maximally positive} we get extra negative curvature between every two $g$-maximally positively curved $n^{(0)}$-gons. We will use the additional information we have about $g$ to show that there must also be extra negative curvature after the last $g$-maximally positively curved $n^{(0)}$-gon, which we will denote by $f_0$.

    Assume that there is no extra negative curvature after $f_0$ along $g$. Since $g$ is a combinatorial geodesic from $e$ and $f_0$ is $g$-maximally positively curved, the last vertex of $f_0$ on $g$, $v_0$, has to be the unique maximal vertex of $f_0$. The unique minimal vertex of $f_0$ also needs to be contained in $g$, which implies that all vertices of $f_0$ belong to $\overline{\cone{u_1}}$. 
    
    Because $g$ is a geodesic, the next vertex on it after $v_0$, $v_1$, has to be of strictly larger distance. In addition, the neighbouring vertex to $v_0$ that is not on $g$, $v_{-1}$, has to be of strictly smaller distance. These three vertices $v_{-1}$, $v_0$ and $v_1$ have to be on the next $g$-cell $f_1$, as otherwise $v_0$ would be contained in more than two cells, which would give extra negative curvature. 
    
    Now, $f_1$ needs to be almost $g$-maximally positively curved, but as $g$ enters $f_1$ from a non-minimal vertex, this implies that $f_1$ also has a unique maximal vertex which is on $g$. As $v_{-1}$ needs to be the unique minimal vertex of $f_1$, all vertices of $f_1$ also belong to $\overline{\cone{u_1}}$.
    
    We can continue this analysis along $g$ until we reach $w'$. Denote the $g$-almost maximally curved cell containing $w'$ by $f_{w'}$. All the vertices of $f_{w'}$ are in $\overline{\cone{u_1}}$. We already observed that $w'$ and $v'$ are of distance two, but Lemma \ref{comb_geod_enters} also tells us that the vertex between them, $v_{min}$ has to be the unique minimal vertex of $L^*(x)$ for some $x\in X^{(0)}$ containing $w'$ and $v'$. 
    
    If we assume that $\kappa(w')<\frac{\pi}{3}$, then $\kappa(w')\leq0$ and though we might have $\kappa_{\mathcal{D}}^v(g)+\kappa_{\mathcal{D}}^f(g) -\frac{1}{2}\kappa_{\mathcal{D}}^f(g,g')+ \sum_{\substack{c\in \mathcal{D}^{(2)}\\\text{triangle coming from }\\ \text{Lemma \ref{maximally positive} applied to $g$}}} \kappa(c) = \frac{2\pi}{3}$, we still get:
    
    \begin{equation}
        \kappa_{\mathcal{D}}^v(g)+\kappa_{\mathcal{D}}^f(g) -\frac{1}{2}\kappa_{\mathcal{D}}^f(g,g')+ \sum_{\substack{c\in \mathcal{D}^{(2)}\\\text{triangle coming from }\\ \text{Lemma \ref{maximally positive} applied to $g$}}} \kappa(c)+\kappa(w')
                 \leq \frac{2\pi}{3}.
    \label{final estimate for g}
    \end{equation}
    
    Otherwise, if $\kappa(w')=\frac{\pi}{3}$, since there is no extra negative curvature along $g$, the vertex $v_{min}$ is also a part of $f_{w'}$, so it is in $\overline{\cone{u_1}}$. But this would mean that $v'$ is also in $\overline{\cone{u_1}}$, which is a contradiction.

    To conclude, the estimate \ref{final estimate for g} always holds, either because $g$ has smaller curvature than a regular combinatorial geodesic, or because $w'$ is negatively curved.
    
    Incorporating the curvature estimates \ref{final estimate for g} and \ref{curv est g'} for $g$ and $g'$ into the inequality \ref{lex_curv_est2} (along with $\kappa(v')\leq\frac{\pi}{3}$) we get:

    \begin{equation}
        0\leq\kappa_{\mathcal{D}}^v(l)
        \label{lex_curv_est3}
    \end{equation}

    However, the disc diagram $\mathcal{D}$ at $v_{min}$ has an interior angle $\frac{4\pi}{3}$, so we get a contradictory upper bound:
    
    \begin{equation}
        \kappa_{\mathcal{D}}^v(l)=\kappa(v_{min})\leq-\frac{\pi}{3}.
    \end{equation}
    
\end{proof}

\bibliographystyle{plain}

\end{document}